\newtheorem{theorem}{Theorem}[section] 
\newtheorem{lemma}[theorem]{Lemma}     
\newtheorem{corollary}[theorem]{Corollary}
\newtheorem{proposition}[theorem]{Proposition}
\theoremstyle{definition}
\theoremstyle{remark}
\newtheorem{remark}{Remark}
\newcommand{\I}{{\mathds {1}}}
\begin{document}

\title[Invariant subspaces for $H^2$ spaces]%
{Invariant subspaces for $H^2$ spaces of $\sigma$-finite algebras}

\author{Louis Labuschagne}

\address{DST-NRF CoE in Math. and Stat. Sci,\\ Unit for BMI,\\ Internal Box 209, School of Comp., Stat., $\&$ Math. Sci.\\
NWU, PVT. BAG X6001, 2520 Potchefstroom\\ South Africa}
\email{louis.labuschagne@nwu.ac.za}

\subjclass[2010]{46L51, 46L52, 47A15 (primary), 46J15, 46K50 (secondary)}

\thanks{This work is based on research supported by the National Research Foundation. Any opinion, findings and conclusions or recommendations expressed in this material, are those of the author, and therefore the NRF do not accept any liability in regard thereto.}
\date{\today}

\begin{abstract}
We show that a Beurling type theory of invariant subspaces of noncommutative $H^2$ spaces holds true in the setting of subdiagonal subalgebras of $\sigma$-finite von Neumann algebras. This extends earlier work of Blecher and Labuschagne \cite{BL-Beurling} for finite algebras, and complements more recent contributions in this regard by Bekjan \cite{Bek2} and Chen, Hadwin and Shen \cite{CHS} in the finite setting, and Sager \cite{sager} in the semifinite setting.

We then also introduce the notion of an analytically conditioned algebra, and go on to show that in the class of analytically conditioned algebras this Beurling type theory is part of a list of properties which all turn out to be \emph{equivalent} to the maximal subdiagonality of the given algebra.
\end{abstract}

\maketitle

\section{Background and Introduction}

In the late 50's and early 60's of the previous century, it became apparent that many
famous theorems about the classical $H^\infty$ space of bounded analytic functions on the disk, could be generalized to the setting of abstract function algebras. Many notable researchers contributed to the development of these ideas; in particular
Helson and Lowdenslager \cite{HL}, and Hoffman \cite{Ho}. The paper
\cite{SW} of Srinivasan and Wang, from the middle of the 1960s, organized and summarized much of this
`commutative generalized $H^p$-theory'. The construct that Srinivasan and Wang used to summarise these results in \cite{SW}, was the so-called {\em weak* Dirichlet algebras}. Essentially this summary furnishes one with an array of properties that are all in some way equivalent to the validity of a Szeg\"o formula for these weak* Dirichlet algebras.

Round about the same time that the paper of Srinivasan and Wang appeared, Arveson introduced his notion of subdiagonal subalgebras of von Neumann algebras as a possible context for extending this cycle of results to the noncommutative context \cite{PTAG,AIOA}. In the case that $A$ is a maximal subdiagonal subalgebra of a von Neumann algebra $M$ equipped with a finite trace (all concepts defined below), $H^p$ may be defined to be the closure of $A$ in the noncommutative $L^p$ space
$L^p(M)$. In the case where $A$ contains no selfadjoint elements except scalar multiples of the identity, 
the $H^p$ theory will in the setting where $M$ is commutative, collapse to the classical theory of $H^p$-spaces associated to weak* Dirichlet algebras. Thus Arveson's setting canonically extends the notion of weak*
Dirichlet algebras.

The theory of these subdiagonal algebras progressed at a carefully measured pace, until in 2005, Labuschagne \cite{L-Szego} managed to use some of Arveson's ideas to show that in the context of finite von Neumann algebras, these maximal subdiagonal algebras satisfy a 
Szeg\"o formula. 

Pursuant to this breakthrough, in a sequence of papers (\cite{BL1}, \cite{BL-FMRiesz}, \cite{BL-outer}, \cite{BL-Beurling}, 
\cite{BL-outer2}), complemented by important contributions from Ueda \cite{Ueda}, and Bekjan and Xu \cite{BX}, Blecher and Labuschagne demonstrated that in the context of finite von Neumann algebras the \emph{entire} cycle of results (somewhat surprisingly) survives the passage to noncommutativity. Specifically it was shown that the same cycle of results hold true for what Blecher and Labuschagne call tracial subalgebras of a finite von Neumann algebra (see \cite{BLsurvey}). 

With the theory of subdiagonal subalgebras of finite von Neumann algebras thereby reaching some level of maturity, several authors then turned their attention to the the analysis of the case of $\sigma$-finite von Neumann algebras. Important structural results were obtained by Ji, Ohwada, Saito, Bekjan and Xu (\cite{JOS}, \cite{jisa}, \cite{Xu}, \cite{jig2}, \cite{jig1}, \cite{Bek}). 

However the transition from finite to $\sigma$-finite von Neumann algebras cannot be made without some sacrifice. One very costly price paid for the passage to the $\sigma$-finite case, is the loss of the theory of the Fuglede-Kadison determinant (\cite{FKa}, \cite{AIOA}). (As was shown by Sten Kaijser \cite{Kai}, the presence of such a determinant forces the existence of a finite trace, and hence the theory of the Fuglede-Kadison determinant, is is essentially a theory of finite von Neumann algebras.) In the case of subdiagonal subalgebras of finite von Neumann algebras, this determinant played the role of a geometric mean, and hence featured very prominently in the development of that theory. But with no such determinant, how does one even begin to give a sensible and useful description of a geometric mean, and with no geometric mean, how can one give expression to a Szeg\"o formula in this context? 

In this paper we show that despite this very formidable challenge, there are nevertheless several aspects of the tracial theory which survives the transition to the type III case. These aspects include a very detailed Beurling-type theory of invariant subspaces, and an extension of the so-called \emph{unique normal state extension property}. (One version of the unique normal state extension property amounts to the claim that any $f\in L^1(M)^+$ will belong to $L^1(\mathcal{D})$ whenever $f\perp (A\cap \mathrm{ker}(\mathcal{E}))$, where $\mathcal{E}$ is a conditional expectation from $M$ onto $A\cap A^*$.) In fact these theories not only hold for type III maximal subdiagonal algebras, but serve to characterise them among the class of what we will call \emph{analytically conditioned} subalgebras (definition loc. cit.). See Theorem \ref{Co}. As we shall see, in many cases the proofs turn out to be remarkably similar to those of Blecher and Labuschagne in \cite{BL-outer}, with important and at times quite subtle technical modifications needing to be made at crucial points.

Throughout $M$ will be a $\sigma$-finite von Neumann algebra equipped with a faithful normal state $\varphi$. A weak*-closed unital subalgebra $A$ of $M$ will be called subdiagonal, if there exists a faithful normal conditional expectation $\mathcal{E}$ onto the subalgebra $\mathcal{D}=A\cap A^*$, which is also multiplicative on $A$. Here $\mathcal{D}=A\cap A^*$ is sometimes referred to as the diagonal. In cases where the identity of the diagonal is important, we will say that $A$ is subdiagonal with respect to $\mathcal{D}$. We pause to point out that the assumption regarding the weak*-closedness of $A$ does not generally form part of the definition of subdiagonality. But since we are primarily interested in studying maximal subdiagonal algebras, and since the weak* closure of an algebra that is subdiagonal with respect to $\mathcal{D}$ will also be subdiagonal with respect to $\mathcal{D}$, we may make this assumption without any loss of generality. 

The following theorem characterises those subdiagonal algebras which are maximal with respect to a given diagonal $\mathcal{D}$. We pause to give some insight into this theorem. With $A$ a subdiagonal algebra and $\mathcal{D}$ and $\mathcal{E}$ as above, the condition $\varphi\circ\mathcal{E}=\varphi$ turns out to be equivalent to the claim that $\sigma_t^\varphi(\mathcal{D})=\mathcal{D}$ for all $t\in \mathbb{R}$. In fact the very existence of $\mathcal{E}$ is ensured by the fact that the maps $\sigma_t^\varphi$ ``preserve'' $\mathcal{D}$. (See \cite[Theorem IX.4.2]{Tak}.) However if alternatively we had that the maps $\sigma_t^\varphi$ preserve $A$, the fact that they would then also preserve $\mathcal{D}$, is a trivial consequence of the fact that $\mathcal{D}=A\cap A^*$. Hence such preservation of $A$ by these maps, is more restrictive than preservation of $\mathcal{D}$, and as such guarantees the existence of $\mathcal{E}$. As can be seen from the theorem, if $A$ is large enough to ensure that $A+A^*$ is weak*-dense in $M$, then maximality with respect to $\mathcal{D}$ is signified by precisely this more restrictive requirement.

\begin{theorem}[\cite{Xu}, \cite{JOS}] Let $A$ be a weak* closed unital subalgebra of $M$ with $\mathcal{D}$ and $\mathcal{E}$ as before, and assume that additionally $A+A^*$ is weak*-dense in $M$. Then $A$ is maximal as a subdiagonal subalgebra with respect to $\mathcal{D}$ if and only if  $\sigma_t^\varphi(A)=A$ for all $t\in \mathbb{R}$.
\end{theorem}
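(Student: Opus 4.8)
The plan is to treat the two implications separately, with the ``only if'' direction being the substantial one, and to preface both with a common reduction. Because $\mathcal{D}=A\cap A^{*}$ is globally invariant under $(\sigma_{t}^{\varphi})$, the conditional expectation $\mathcal{E}$ of $M$ onto $\mathcal{D}$ is the unique $\varphi$-preserving normal one, so $\varphi\circ\mathcal{E}=\varphi$ and $\mathcal{E}\sigma_{t}^{\varphi}=\sigma_{t}^{\varphi}\mathcal{E}$ for every $t$. A first consequence, which I would record at the outset, is that each $\sigma_{t}^{\varphi}(A)$ is again subdiagonal with respect to $\mathcal{D}$, with the \emph{same} conditional expectation $\mathcal{E}$, and with $\sigma_{t}^{\varphi}(A)+\sigma_{t}^{\varphi}(A)^{*}=\sigma_{t}^{\varphi}(A+A^{*})$ again weak*-dense: weak*-closedness and $\sigma_{t}^{\varphi}(A)\cap\sigma_{t}^{\varphi}(A)^{*}=\mathcal{D}$ are immediate, and multiplicativity of $\mathcal{E}$ on $\sigma_{t}^{\varphi}(A)$ follows from $\mathcal{E}\sigma_{t}^{\varphi}=\sigma_{t}^{\varphi}\mathcal{E}$ and multiplicativity on $A$. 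In particular $A_{0}:=A\cap\ker\mathcal{E}$, which is a two-sided ideal of $A$, is $\sigma^{\varphi}$-invariant whenever $A$ is.

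For the ``if'' direction, suppose $\sigma_{t}^{\varphi}(A)=A$ for all $t$ and let $B$ be subdiagonal with respect to $\mathcal{D}$ with $A\subseteq B\subseteq M$. The conditional expectation belonging to $B$ is a $\varphi$-preserving normal conditional expectation of $M$ onto $\mathcal{D}$, hence equals $\mathcal{E}$ by uniqueness, so $\mathcal{E}$ is multiplicative on $B$ and $A_{0}\subseteq B_{0}:=B\cap\ker\mathcal{E}$; it remains to prove $B_{0}\subseteq A$. I would pass to the GNS space of $\varphi$ with cyclic vector $\xi$, put $H^{2}=\overline{A\xi}$, $H_{0}^{2}=\overline{A_{0}\xi}$, and use $\Delta_{\varphi}^{it}(x\xi)=\sigma_{t}^{\varphi}(x)\xi$ together with $\sigma_{t}^{\varphi}(A)=A$ to get $\Delta_{\varphi}^{it}H^{2}=H^{2}$ and $\Delta_{\varphi}^{it}H_{0}^{2}=H_{0}^{2}$. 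Feeding this modular invariance into the module and KMS relations — just as in the finite/tracial prototype, where $\sigma^{\varphi}$ is implemented by a unitary of $\mathcal{D}$ and therefore fixes every $\mathcal{D}$-bimodule containing $\mathcal{D}$ — one shows first that $\varphi(b_{0}a^{*})=0$ for $b_{0}\in B_{0}$, $a\in A$, and then that $B_{0}\xi\subseteq H_{0}^{2}$. Weak*-density of $A+A^{*}$, equivalently of $A_{0}+\mathcal{D}+A_{0}^{*}$, is exactly what makes the orthogonal decomposition $L^{2}(M,\varphi)=H_{0}^{2}\oplus\overline{\mathcal{D}\xi}\oplus\overline{A_{0}^{*}\xi}$ complete, and this converts $B_{0}\xi\subseteq H_{0}^{2}$ back into $B_{0}\subseteq A_{0}\subseteq A$. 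A heavier but conceptually cleaner alternative for this direction is to use $\sigma^{\varphi}$-invariance of $A$ to form the crossed product $A\rtimes_{\sigma^{\varphi}}\mathbb{R}$ inside the \emph{semifinite} algebra $M\rtimes_{\sigma^{\varphi}}\mathbb{R}$, apply the known maximality theory in the semifinite (trace) setting, and descend.

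For the ``only if'' direction, assume $A$ is maximal. Since $x\mapsto x\xi$ is injective and, by a standard equivalent form of maximal subdiagonality available when $A+A^{*}$ is weak*-dense, $A=\{x\in M:x\xi\in H^{2}\}$, it suffices to prove $\Delta_{\varphi}^{it}H_{0}^{2}=H_{0}^{2}$ for all $t$ (the summand $\overline{\mathcal{D}\xi}$ of $H^{2}$ being manifestly $\Delta_{\varphi}^{it}$-invariant): this gives $\overline{\sigma_{t}^{\varphi}(A)\xi}=\overline{A\xi}$, and since $\sigma_{t}^{\varphi}(A)$ is likewise maximal, $\sigma_{t}^{\varphi}(A)=\{x:x\xi\in\overline{\sigma_{t}^{\varphi}(A)\xi}\}=\{x:x\xi\in H^{2}\}=A$. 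Now $\Delta_{\varphi}^{it}H_{0}^{2}=\overline{\sigma_{t}^{\varphi}(A_{0})\xi}$, and a short computation using $\varphi\circ\mathcal{E}=\varphi$ shows $\sigma_{t}^{\varphi}(A_{0})\xi\perp\overline{\mathcal{D}\xi}$, so the maximality decomposition $L^{2}(M,\varphi)=H_{0}^{2}\oplus\overline{\mathcal{D}\xi}\oplus\overline{A_{0}^{*}\xi}$ reduces the claim (after also running the argument for $-t$) to the single orthogonality relation
\[
\varphi\bigl(b_{0}\,\sigma_{t}^{\varphi}(a_{0})\bigr)=0\qquad\text{for all }a_{0},b_{0}\in A_{0}\text{ and all }t\in\mathbb{R}.
\]
I expect this to be the main obstacle, and the point at which a genuinely new idea is required. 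On its face the relation is circular: every closed subspace, ideal, or predual-annihilator canonically attached to $A$ is $\sigma^{\varphi}$-invariant exactly when $A$ is, so it cannot be obtained by soft arguments and must be wrung out of the interaction between the hypothesis $\varphi=\varphi\circ\mathcal{E}$ and $\sigma^{\varphi}$. Concretely, I would fix $a_{0},b_{0}$ in a weak*-dense $*$-subalgebra of entire $\sigma^{\varphi}$-analytic elements and study the function $z\mapsto\varphi(b_{0}\,\sigma_{z}^{\varphi}(a_{0}))$, which is analytic and bounded on the strip $-1\le\operatorname{Im}z\le0$; at $z=0$ and at $z=-i$ the KMS relation reduces it to $\varphi(b_{0}a_{0})$ and to $\varphi(a_{0}b_{0})$ respectively, both of which vanish since $A_{0}A_{0}\subseteq A_{0}$ and $\varphi|_{A_{0}}=\varphi\circ\mathcal{E}|_{A_{0}}=0$. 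The residual and genuinely hard content is to force this bounded analytic function to vanish identically on the strip — so that its real-axis boundary values, which are precisely the quantities above, all vanish — and this is where one has to bring in more of the structure theory of maximal subdiagonal algebras rather than argue formally. Finally, the weak*-density of $A+A^{*}$ is essential rather than cosmetic: without it maximal subdiagonal extensions need not be unique and the statement fails outright — for instance the diagonal $\mathcal{D}$ of $M_{2}$ is subdiagonal with respect to $\mathcal{D}$ yet lies inside two distinct maximal subdiagonal algebras, the upper- and the lower-triangular matrices — so any correct argument must invoke it somewhere, which in the $L^{2}$ route it does through the completeness of the decomposition above.
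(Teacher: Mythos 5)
The paper does not actually prove this theorem: it is quoted as background, with the ``if'' direction attributed to \cite[Theorem 1.1]{Xu} and the ``only if'' direction to \cite[Theorem 2.4]{JOS}. Your proposal is therefore measured against those sources, and as it stands it is an outline that correctly locates the difficulties but does not resolve either one. The most serious problem is in the ``only if'' direction. Your reduction to the single relation $\varphi(b_{0}\sigma_{t}^{\varphi}(a_{0}))=0$ is a reasonable way to organise the argument, but the analytic-continuation step you propose cannot close it: knowing that the bounded analytic function $z\mapsto\varphi(b_{0}\sigma_{z}^{\varphi}(a_{0}))$ vanishes at the two boundary points $z=0$ and $z=-i$ of the strip carries essentially no information (a conformal map to the disc turns this into a bounded analytic function vanishing at two boundary points, of which there are plenty of nonzero examples, e.g.\ $(1-w)(1+w)$). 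You acknowledge that ``a genuinely new idea is required'' here, which is an honest admission that the central step of this direction is missing; Ji--Ohwada--Saito's actual argument does not proceed by forcing this function to vanish on the strip.

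The ``if'' direction also has an unfilled gap of comparable depth. Granting your (correct) computations that $B_{0}\xi\perp\overline{\mathcal{D}\xi}$ and $B_{0}\xi\perp\overline{A_{0}^{*}\xi}$, so that $B_{0}\xi\subseteq H_{0}^{2}$ once the decomposition $L^{2}(M,\varphi)=H_{0}^{2}\oplus\overline{\mathcal{D}\xi}\oplus\overline{A_{0}^{*}\xi}$ is in hand, the passage from $B_{0}\xi\subseteq H_{0}^{2}=\overline{A_{0}\xi}$ back to $B_{0}\subseteq A_{0}$ requires the identification $\{x\in M:x\xi\in \overline{A\xi}\}=A$. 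In the finite/tracial setting this is a known (and already nontrivial) theorem; in the $\sigma$-finite setting it is essentially equivalent in difficulty to the maximality statement being proved and is precisely the kind of fact Xu's paper establishes, so invoking it here is circular unless you supply an independent proof. The same identification is invoked again, as a ``standard equivalent form of maximal subdiagonality,'' in your ``only if'' argument. What is sound in the proposal: the preliminary observation that each $\sigma_{t}^{\varphi}(A)$ is again subdiagonal with respect to $\mathcal{D}$ with the same $\mathcal{E}$; the correct computation of where $\varphi\circ\mathcal{E}=\varphi$ and the KMS condition enter; and the closing remark that weak*-density of $A+A^{*}$ is indispensable, with the diagonal of $M_{2}$ as a correct counterexample for the ``if'' direction without it.
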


\begin{proof} The ``if'' part follows from \cite[Theorem 1.1]{Xu}. The ``only if'' part from \cite[Theorem 2.4]{JOS}. 
\end{proof}
 
Let $A$, $\mathcal{D}$ and $\mathcal{E}$ be as before. The above result may alternatively be interpreted as the statement that any weak*-closed subdiagonal subalgebra $A$ for which we have that $\sigma_t^\varphi(A)=A$ for all $t\in \mathbb{R}$, will be maximal subdiagonal with respect to $\mathcal{D}$ whenever $A+A^*$ is weak*-dense in $M$. It is this interpretation that we use as our starting point. We will therefore call any weak* closed unital subalgebra $A$ of $M$ for which 
\begin{itemize}
\item $\sigma_t^\varphi(A)=A$ for all $t\in \mathbb{R}$,
\item and for which the faithful normal conditional expectation $\mathcal{E}:M\to \mathcal{D}=A\cap A^*$ satisfying $\varphi\circ\mathcal{E}=\varphi$ (ensured by the above condition \cite[Theorem IX.4.2]{Tak}), is multiplicative on $A$
\end{itemize}
an \emph{analytically conditioned} subalgebra. In the case that $\varphi$ is actually a tracial state, Blecher and Labuschagne called such algebras \emph{tracial} algebras. If additionally we assume that $A+A^*$ is $\sigma$-weakly dense in $M$, then $(M, A, \mathcal{E}, \varphi)$ is what Prunaru calls a subdiagonal quadruple \cite{pru}. However in deference to the preceding theorem and following GuoXing Ji, we will simply refer to such algebras as maximal subdiagonal. Given an analytically conditioned algebra, our objective in this paper is then to look for properties that may be compared to the criterion of requiring $A+A^*$ to be weak*-dense in $M$.

For the sake of simplicity we will in the discussion that follows write $L$ for $M\rtimes_\varphi \mathbb{R}$. The crossed product of course admits a dual action of $\mathbb{R}$ in the form of an automorphism group $\theta_s$ and a canonical trace characterised by the property that $\tau_L\circ \theta_s=e^{-s}\tau_L$. The $L^p$-spaces are defined as $L^p(M)=\{a\in \widetilde{L}: \theta_s(a)=e^{-s/p}a \mbox{ for all } s\in \mathbb{R}\}$. The space $L^1(M)$ admits a canonical trace functional $tr$, which is used to define a norm $\|a\|=tr(|a|^p)^{1/p}$ on $L^p(M)$. The topology on $L^p(M)$ engendered by this norm, coincides with the relative topology of convergence in measure that $L^p(M)$ inherits from $\widetilde{L}$.

Now let $h=\frac{d\widetilde{\varphi}}{d\tau_L}\in L^1(M)$. It is well-known that $L^p(M)$ may for any $0\leq c\leq 1$ be realised as the completion of $\{h^{c/p}fh^{(1-c)/p}: f\in M\}$ under the norm $tr(|a|^p)^{1/p}$ \cite{Kos}. Given $1\leq p <\infty$, we know from the work of Ji \cite[Theorem 2.1]{jig1} that for any maximal subdiagonal algebra $A$, the closures of each of $\{h^{c/p}fh^{(1-c)/p}: f\in A\}$ in $L^p(M)$ (where $0\leq c\leq 1$), all agree. It is this closure that we will identify as our Hardy spaces $H^p(A)$. However a careful perusal of \cite[Theorem 2.1]{jig1}, reveals that all we need for the proof of that theorem to go through, is the invariance of $A$ under the action of $\sigma_t^\varphi$. Hence even for analytically conditioned algebras we have that the closures of $\{h^{c/p}fh^{(1-c)/p}: f\in A\}$ agree for each $0\leq c\leq 1$. Note that this fact ensures that these closures are all right $\mathcal{D}$-modules. In the case where we are dealing with analytically conditioned algebras, we will write $\mathcal{H}^p(A)$ for these closures, and occasionally refer to this subspace of $L^p(M)$ as the $L^p$-hull of $A$.  For subspaces $X$ of $L^p(M)$, we will simply write $[X]_p$ for the closure in $L^p(M)$. Ji also showed that for maximal subdiagonal algebras, $L^p(M)=H^p(A)\oplus H^p_0(A)^*$ for any $1<p<\infty$ \cite[Theorem 3.3]{jig1}. 

We recall that a {\em (right) invariant subspace} of $L^p(M)$, is
a closed subspace $K$ of $L^p(M)$ such that $K A \subset K$.
For consistency,
we will not consider left invariant subspaces at all, leaving the reader to verify
that entirely symmetric results pertain in the left invariant case.
An invariant subspace is called {\em simply invariant} if in addition
the closure of $K A_0$ is properly contained in $K$.

If $K$ is a right $A$-invariant subspace of $L^2(M)$, we define
the {\em right wandering subspace} of $K$ to
be the space $W = K \ominus [K A_0]_2$;
and we say that $K$ is {\em type 1} if
$W$ generates $K$ as an $A$-module (that is, $K = [W A]_2$).
We will say that $K$ is
{\em type 2} if $W = (0)$. 

\section{Invariant subspaces and the module action of $\mathcal{D}$}

We pause to review some necessary technical facts regarding faithful normal conditional expectations, before proceeding with the analysis.

\begin{remark}\label{expect} We proceed to review some basic properties of the expectation $\mathcal{E}$ in this context. The basic references we will use for properties of expectations are \cite{Gol} and \cite{Junge}. It is instructive to note that $\mathcal{D}\rtimes_{\sigma^{\varphi}}{\mathbb R}$, can be realised as a subalgebra of $L=M\rtimes_{\sigma^{\varphi}}{\mathbb R}$. In fact $\mathcal{E}$ extends canonically to a conditional expectation from $M\rtimes_{\sigma^{\varphi}}{\mathbb R}$ to $\mathcal{D}\rtimes_{\sigma^{\varphi}}{\mathbb R}$, which we will here denote by $\overline{\mathcal{E}}$. Moreover for any $1\leq p<\infty$ this extension canonically induces an expectation $\mathcal{E}_p$ from $L^p(M)$ to $L^p(\mathcal{D})$. Note in particular that

\begin{itemize}
\item $\overline{\mathcal{E}}\circ\theta_s=\theta_s\overline{\mathcal{E}}\circ$ for any $s$ where denotes the dual action of $\theta_s$ ${\mathbb R}$ on $M\rtimes_{\sigma^{\varphi}}{\mathbb R}$. \cite[4.4]{Gol}.
\item With $\widetilde{\varphi}$ denoting the dual weight on $L=M\rtimes_{\sigma^{\varphi}}{\mathbb R}$ and $\tau_L$ the canonical trace on the crossed product, $\overline{\mathcal{E}}$ is both $\widetilde{\varphi}$ and $\tau_L$ invariant. \cite[Theorem 4.7]{Gol}
\item $\mathcal{E}_1$ maps $h_M=\frac{d \widetilde{\varphi}}{d \tau_L}$ onto $h_{\mathcal{D}}=\frac{d \widetilde{\varphi}}{d \tau_{\mathcal{D}\rtimes{\mathbb R}}}$. \cite[Lemma 4.8]{Gol}, \cite[Lemma 2.1]{Junge}.
\item $\overline{\mathcal{E}}$ extends canonically to the extended positive part of $M\rtimes_{\sigma^{\varphi}}{\mathbb R}$. When restricted to $L^p_+(M)$ ($1\leq p<\infty$), this extension coincides with the restriction of $\mathcal{E}_p$.
\item For $s\geq 1$, $\frac{1}{s}=\frac{1}{p}+\frac{1}{q}+\frac{1}{r}$, $a\in L^p(\mathcal{D})$, $b\in L^p(M)$ and $c\in L^p(\mathcal{D})$, we have $\mathcal{E}_s(abc)=a\mathcal{E}_q(b)c$. \cite[2.5]{Junge}
\item For any $a\in L^1(M)$, we have that $tr(\mathcal{E}_1(a))=tr(a)$ where $tr$ is the canonical trace functional on $L^1(M)$. See \cite[Lemma 2.1]{Junge} and the discussion preceding \cite[2.5]{Junge} where it is noted that $\mathcal{E}_1=\overline{\mathcal{E}}_*$.
\end{itemize}
\end{remark}

\begin{proposition} Let $A$ be an analytically conditioned algebra. Given $r\geq 1$ with $\frac{1}{r}=\frac{1}{p}+\frac{1}{q}$, and given $a\in \mathcal{H}^p(A)$ and $b\in \mathcal{H}^q(A)$, we have that $ab\in \mathcal{H}^r(A)$ with $\mathcal{E}_r(ab)=\mathcal{E}_p(a)\mathcal{E}_q(b)$.
\end{proposition}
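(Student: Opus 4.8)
The plan is to first establish the multiplicativity relation $\mathcal{E}_r(ab)=\mathcal{E}_p(a)\mathcal{E}_q(b)$ on the dense subspaces of $\mathcal{H}^p(A)$ and $\mathcal{H}^q(A)$ consisting of elements of the form $h^{1/p}f$ with $f\in A$ (respectively $h^{1/q}g$ with $g\in A$), and then pass to the closures by a density and continuity argument. For the dense case: writing $a=h^{1/p}f$ and $b=h^{1/q}g$, we would like to say $ab=h^{1/p}fh^{1/q}g$. The obstacle here is that $f$ and $h^{1/q}$ do not commute, so $ab$ is not literally of the form $h^{1/r}(\text{something in }A)$; this is exactly why the $\sigma$-finite setting is more delicate than the tracial one. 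To handle this I would invoke the analytic-generator/Connes-cocycle machinery: since $\sigma_t^\varphi(A)=A$, the one-parameter group $\sigma_t^\varphi$ acts on $A$, and $h^{it}fh^{-it}=\sigma_t^\varphi(f)\in A$. Using the fact (which underlies Ji's result \cite[Theorem 2.1]{jig1}, quoted in the excerpt) that the closures of $\{h^{c/r}fh^{(1-c)/r}:f\in A\}$ all coincide for $0\le c\le 1$, one can rewrite $h^{1/p}fh^{1/q}g$, after moving the $h$ powers past $f$ using the analytic continuation of $\sigma_t^\varphi$, as a limit (in the appropriate $L^r$-norm) of elements of the form $h^{1/r}(\text{element of }A)$, placing $ab$ in $\mathcal{H}^r(A)$.

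For the conditional-expectation identity on the dense subspace: by the module property of $\overline{\mathcal{E}}$ listed in Remark \ref{expect} (namely $\mathcal{E}_s(abc)=a\mathcal{E}_q(b)c$ for $a,c\in L^\bullet(\mathcal{D})$), together with the multiplicativity of $\mathcal{E}$ on $A$ and the fact that $\mathcal{E}_1(h_M)=h_{\mathcal{D}}$, one computes $\mathcal{E}_r(h^{1/p}fh^{1/q}g)$ by splitting the $h$-powers and pushing $\overline{\mathcal{E}}$ through: $\mathcal{E}(f h^{1/q})$-type terms are handled by writing $h^{1/q}$ as an integral of $h^{it}$-translates (or by first reducing to $f,g$ in the analytic subalgebra on which $\sigma_z^\varphi$ is entire), commuting past $f$, and using $\overline{\mathcal{E}}\circ\sigma_t^\varphi=\sigma_t^\varphi\circ\overline{\mathcal{E}}$. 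The upshot is $\mathcal{E}_r(ab)=\mathcal{E}_p(h^{1/p}f)\mathcal{E}_q(h^{1/q}g)$, i.e.\ $\mathcal{E}_p(a)\mathcal{E}_q(b)$.

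Finally, to pass to general $a\in\mathcal{H}^p(A)$ and $b\in\mathcal{H}^q(A)$: choose $a_n\to a$ in $L^p(M)$ and $b_n\to b$ in $L^q(M)$ with $a_n,b_n$ in the dense subspaces above. By the (noncommutative) Hölder inequality, $a_nb_n\to ab$ in $L^r(M)$, so $ab\in\mathcal{H}^r(A)$ since each $a_nb_n\in\mathcal{H}^r(A)$ and $\mathcal{H}^r(A)$ is closed. Since $\mathcal{E}_s$ is Hölder-continuous (contractive in the relevant $L^s$-norms, again from Remark \ref{expect}), $\mathcal{E}_r(a_nb_n)\to\mathcal{E}_r(ab)$ and $\mathcal{E}_p(a_n)\to\mathcal{E}_p(a)$, $\mathcal{E}_q(b_n)\to\mathcal{E}_q(b)$ in $L^p(\mathcal{D})$, $L^q(\mathcal{D})$ respectively, with $\mathcal{E}_p(a_n)\mathcal{E}_q(b_n)\to\mathcal{E}_p(a)\mathcal{E}_q(b)$ in $L^r(\mathcal{D})$ by Hölder; taking limits in the dense-case identity gives the result. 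The main obstacle is the first step — making rigorous, in the absence of a trace, the manipulation of the $h$-powers and the proof that $ab$ lands back in $\mathcal{H}^r(A)$ — and this is where the invariance $\sigma_t^\varphi(A)=A$ and Ji's coincidence-of-closures result do the essential work.
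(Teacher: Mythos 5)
Your outer structure (prove the identity on a dense set, then pass to limits via H\"older and the contractivity of the $\mathcal{E}_s$) matches the paper, and your closing limit argument is fine. But the core step --- showing $ab\in\mathcal{H}^r(A)$ and computing $\mathcal{E}_r(ab)$ for dense elements --- is where you and the paper part ways, and your version of it is not actually carried out. You approximate both factors from the left, as $h^{1/p}f$ and $h^{1/q}g$, and then must move $h^{1/q}$ past $f$; you propose to do this with analytic continuation of $\sigma^\varphi_t$ and with ``writing $h^{1/q}$ as an integral of $h^{it}$-translates.'' The latter is not a legitimate manoeuvre as stated (no such integral representation of $h^{1/q}$ exists), and the former, while in principle workable (smear $f$ to an entire analytic element of $A$, use $\sigma^\varphi$-invariance and weak*-closedness of $A$ to keep $\sigma^\varphi_{z}(f)$ in $A$, control the resulting unbounded commutation relation in $L^r$), is a substantial piece of Tomita--Takesaki bookkeeping that you have only gestured at. As written this is a genuine gap.

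The paper sidesteps the obstacle entirely, using precisely the fact you quote but do not exploit: since the closures of $\{h^{c/s}fh^{(1-c)/s}:f\in A\}$ coincide for all $0\le c\le 1$, you are free to approximate $a$ by elements $h^{1/p}a_0$ (density on the left) and $b$ by elements $b_0h^{1/q}$ (density on the \emph{right}). Then
$(h^{1/p}a_0)(b_0h^{1/q})=h^{1/p}(a_0b_0)h^{1/q}$ with $a_0b_0\in A$, which is of the form $h^{c/r}fh^{(1-c)/r}$ with $c=r/p\in[0,1]$, hence lies in $\mathcal{H}^r(A)$ with no commutation needed; and the bimodule property of $\mathcal{E}$ from Remark \ref{expect} together with multiplicativity of $\mathcal{E}$ on $A$ gives
$\mathcal{E}_r(h^{1/p}a_0b_0h^{1/q})=h^{1/p}\mathcal{E}(a_0)\mathcal{E}(b_0)h^{1/q}=\mathcal{E}_p(h^{1/p}a_0)\mathcal{E}_q(b_0h^{1/q})$
in two lines. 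I recommend you replace your analytic-continuation step with this asymmetric choice of approximants; it turns the hard part of your argument into a triviality and is exactly what Ji's coincidence-of-closures theorem is for.
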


\begin{proof} Given $a_0, b_0\in A$, we have that 
\begin{eqnarray*}
\mathcal{E}_r((h^{1/p}a_0)(b_0h^{1/q}))&=&h^{1/p}\mathcal{E}(a_0b_0)h^{1/q}\quad\mbox{\cite[2.5]{Junge}}\\
&=&(h^{1/p}\mathcal{E}(a_0))(\mathcal{E}(b_0)h^{1/q})\quad\mathcal{E}\mbox{ is multiplicative on }A\\
&=&\mathcal{E}_p(h^{1/p}a_0)\mathcal{E}_q(b_0h^{1/q})\quad\mbox{\cite[2.5]{Junge}}
\end{eqnarray*}
The result follows on extending the actions of $\mathcal{E}_p$, $\mathcal{E}_q$ and $\mathcal{E}_r$ by continuity.\end{proof}

In the following we will where there is no danger of confusion, drop the subscript $p$ when denoting the action of $\mathcal{E}$ on $L^p(M)$.

\begin{corollary}\label{perp} For any analytically conditioned algebra $A$, we have that 
$\mathcal{H}^2(A)+(\mathcal{H}^2(A)^*)= \mathcal{H}^2(A)\oplus L^2(\mathcal{D})\oplus\mathcal{H}^2_0(A)^*$ where $\mathcal{H}^2_0(A)=\{f\in \mathcal{H}^2(A):\widehat{\mathcal{E}}(f)=0\}$.
\end{corollary}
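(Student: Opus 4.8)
The plan is to run the Blecher--Labuschagne argument from the finite case, with the tracial identification $L^2(\mathcal{D})=[\mathcal{D}]_2$ replaced by its $\sigma$-finite substitute $\Delta:=[h^{1/2}\mathcal{D}]_2$. The facts I would lean on are: $\mathcal{E}_2$, being $\tau_L$-preserving, extends to a norm-one idempotent on $L^2(M)$ with range $L^2(\mathcal{D})$, and $tr=tr\circ\mathcal{E}_1$ (Remark~\ref{expect}); the identity $\mathcal{E}_2(h^{1/2})=h_{\mathcal{D}}^{1/2}$, obtained by applying $\mathcal{E}_1(h_M)=h_{\mathcal{D}}$ to $h=h^{1/2}h^{1/2}$ through the Proposition, so that $\mathcal{E}_2(h^{1/2}a_0)=h_{\mathcal{D}}^{1/2}\mathcal{E}(a_0)$ for $a_0\in A$; and $[h^{1/2}\mathcal{D}]_2=[\mathcal{D}h^{1/2}]_2$, which holds because $\mathcal{D}$, like $A$, is analytically conditioned and so the $c=0$ and $c=1$ realisations of its $L^2$-hull coincide. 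Consequently $\Delta$ is self-adjoint, contained in $\mathcal{H}^2(A)$, and carried isometrically onto $L^2(\mathcal{D})$ by $\mathcal{E}_2$, and it is this $\Delta$ that appears as ``$L^2(\mathcal{D})$'' in the statement.

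From $\mathcal{E}_2(h^{1/2}a_0)=h_{\mathcal{D}}^{1/2}\mathcal{E}(a_0)$ one sees that an element $h^{1/2}a_0$ of the dense subspace $h^{1/2}A$ of $\mathcal{H}^2(A)$ lies in $\ker\mathcal{E}_2$ exactly when $a_0\in A_0$, so splitting $h^{1/2}a_0=h^{1/2}(a_0-\mathcal{E}(a_0))+h^{1/2}\mathcal{E}(a_0)$ and closing up gives $\mathcal{H}^2(A)=\mathcal{H}^2_0(A)+\Delta$ with $\mathcal{H}^2_0(A)=[h^{1/2}A_0]_2$. The key orthogonality computation is that for $d\in\mathcal{D}$ and $b\in A_0$, $\langle h^{1/2}d,h^{1/2}b\rangle=tr(b^*hd)=tr(hdb^*)=\varphi(db^*)=\varphi(\mathcal{E}(db^*))=\varphi(d\,\mathcal{E}(b)^*)=0$, using $tr(hx)=\varphi(x)$, $\varphi\circ\mathcal{E}=\varphi$, and the $\mathcal{D}$-bimodularity and $*$-linearity of $\mathcal{E}$; by density this gives $\Delta\perp\mathcal{H}^2_0(A)$, hence $\mathcal{H}^2(A)=\mathcal{H}^2_0(A)\oplus\Delta$. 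Taking adjoints yields $\Delta=\Delta^*\perp\mathcal{H}^2_0(A)^*$, and for the remaining pairing: given $f,g\in\mathcal{H}^2_0(A)\subseteq\mathcal{H}^2(A)$, the Proposition gives $gf\in\mathcal{H}^1(A)$ with $\mathcal{E}_1(gf)=\mathcal{E}_2(g)\mathcal{E}_2(f)=0$, so $\langle f,g^*\rangle=tr(gf)=tr(\mathcal{E}_1(gf))=0$, i.e. $\mathcal{H}^2_0(A)\perp\mathcal{H}^2_0(A)^*$. Thus $\mathcal{H}^2_0(A)$, $\Delta$, $\mathcal{H}^2_0(A)^*$ are mutually orthogonal, and combining this with $\mathcal{H}^2(A)=\mathcal{H}^2_0(A)\oplus\Delta$ and its adjoint gives $\mathcal{H}^2(A)+\mathcal{H}^2(A)^*=\mathcal{H}^2_0(A)\oplus\Delta\oplus\mathcal{H}^2_0(A)^*$, which (with $\Delta$ identified with $L^2(\mathcal{D})$, and $\mathcal{H}^2_0(A)\oplus\Delta=\mathcal{H}^2(A)$) is the asserted decomposition; in particular the sum is closed.

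The step I expect to be the main obstacle, and the genuinely $\sigma$-finite ingredient, is the handling of the ``diagonal'' space $\Delta$. In the tracial case $h$ is inert, $\Delta$ is literally $[\mathcal{D}]_2=L^2(\mathcal{D})$, and there is nothing to check; here $h_M$ need not be affiliated with $\mathcal{D}\rtimes_{\sigma^\varphi}\mathbb{R}$, and one must get two things right: (i) the identity $\mathcal{E}_2(h^{1/2})=h_{\mathcal{D}}^{1/2}$, which is what ties the $L^2$-hull $\Delta$ to the true $L^2(\mathcal{D})$ and which I would extract as above from the Proposition together with $\mathcal{E}_1(h_M)=h_{\mathcal{D}}$; and (ii) the side-independence $[h^{1/2}\mathcal{D}]_2=[\mathcal{D}h^{1/2}]_2$ of that hull, needed both for the self-adjointness of $\Delta$ and for the isometry of $\mathcal{E}_2|_{\Delta}$, and which is exactly where the hypothesis $\sigma_t^\varphi(A)=A$ (via its consequence for $\mathcal{D}$) does its work through the excerpt's extension of Ji's theorem. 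Once (i) and (ii) are in place, the orthogonality computations and the assembly above are essentially the tracial ones.
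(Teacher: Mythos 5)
Your proposal is correct and follows essentially the same route as the paper: the decisive orthogonality comes from the multiplicativity of $\mathcal{E}$ on the Hardy spaces (the Proposition) together with $tr\circ\mathcal{E}_1=tr$, and the decomposition from splitting $f=\mathcal{E}(f)+(f-\mathcal{E}(f))$. The additional care you take with the embedded copy $[h^{1/2}\mathcal{D}]_2$ of $L^2(\mathcal{D})$, its self-adjointness, and the identity $\mathcal{E}_2(h^{1/2})=h_{\mathcal{D}}^{1/2}$ is detail the paper leaves implicit, not a different method.
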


\begin{proof} Given any $f\in \mathcal{H}^2_0(A)$ and $g\in \mathcal{H}^2(A)$, it is a simple matter to see that $\langle g, f^*\rangle=tr(fg)=tr\circ\mathcal{E}(fg) = tr(\mathcal{E}(f)\mathcal{E}(g))=0$. Hence $\mathcal{H}^2_0(A)^*\perp \mathcal{H}^2(A)$. In particular the subspace $L^2(\mathcal{D})$ of $\mathcal{H}^2(A)\cap \mathcal{H}^2(A)$ is also orthogonal to $\mathcal{H}^2_0(A)^*$. Since for any $g\in \mathcal{H}^2(A)$ we have that $\mathcal{E}(f)\in \mathcal{H}^2(M)$ with $\mathcal{E}(f-\mathcal{E}(f))=0$, it follows that $L^2(\mathcal{D})\oplus\mathcal{H}^2_0(A)^*$ is all of $\mathcal{H}^2(A)^*$.
\end{proof}

Using the properties of $\mathcal{E}$ described in the preceding Remark and Proposition, \cite[Theorem 2.1]{BL-Beurling} may now be extended to the $\sigma$-finite setting. The proofs for the two cases are virtually identical, with the primary change needing to be made in the passage from the finite to the $\sigma$-finite case, being that we need to substitute the tracial functional $tr_M$ for the finite trace $\tau_M$ at suitable points. We therefore choose to leave the translation of this proof to the $\sigma$-finite setting as an exercise.

\begin{theorem} \label{inv1}
Let $A$ be an analytically conditioned algebra.    \begin{itemize}
\item [(1)]   Suppose that $X$ is a subspace of $L^2(M)$
of the form $X = Z \oplus^{col} [YA]_2$ where $Z, Y$ are closed
subspaces of $X$, with $Z$
a type 2 invariant subspace,
and $\{y^*x : y, x \in Y \} = Y^*Y \subset L^1({\mathcal D})$.  Then
$X$ is simply right $A$-invariant if and only if $Y \neq
\{0\}$.
\item [(2)]  If $X$ is as in {\rm (1)},
then $[Y {\mathcal D}]_2 = X \ominus [XA_0]_2$ (and
$X = [XA_0]_2 \oplus [Y {\mathcal D}]_2$).
\item [(3)]   If $X$ is as described in {\rm (1)},
then that description also holds if $Y$ is replaced by $[Y {\mathcal D}]_2$.  Thus
(after making this replacement)
we may assume that $Y$ is a ${\mathcal D}$-submodule of $X$.

\item [(4)]   The subspaces $[Y {\mathcal D}]_2$ and $Z$ in the decomposition
in  {\rm (1)} are uniquely determined by $X$.  So is $Y$ if we
take it to be a ${\mathcal D}$-submodule (see {\rm (3)}).
\item [(5)]  If $A$ is maximal subdiagonal, then any right $A$-invariant subspace
$X$ of
$L^2(M)$ is of the form described in {\rm (1)},
with $Y$ the right wandering subspace of $X$.
\end{itemize}
\end{theorem}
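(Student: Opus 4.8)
The plan is to adapt the proof of \cite[Theorem 2.1]{BL-Beurling}, replacing the finite trace used there by the tracial functional $tr$ on $L^1(M)$, and feeding in at the appropriate places the module identities for $\mathcal{E}$ from Remark~\ref{expect}, the product rule $\mathcal{E}_r(ab)=\mathcal{E}_p(a)\mathcal{E}_q(b)$ of the Proposition, and Corollary~\ref{perp}. A standing observation is that faithfulness and multiplicativity of $\mathcal{E}$ on $A$ give a vector space splitting $A=\mathcal{D}\oplus A_0$ with $A_0=A\cap\ker\mathcal{E}$ a two-sided ideal of $A$; hence $[YA]_2=[Y\mathcal{D}]_2+[YA_0]_2$ and $[YA]_2A_0\subseteq[YA_0]_2$ for every subspace $Y\subseteq L^2(M)$.

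For (1)--(4), take $X=Z\oplus^{col}[YA]_2$ as in (1). Using $Y^*Y\subseteq L^1(\mathcal{D})$, multiplicativity of $\mathcal{E}$ on $A$, the module identity $\mathcal{E}_1(fa_0)=f\,\mathcal{E}(a_0)$ for $f\in L^1(\mathcal{D})$, and $tr\circ\mathcal{E}_1=tr$, one computes $\langle y_2a_0,y_1\rangle=tr(y_1^*y_2a_0)=tr\big(\mathcal{E}_1(y_1^*y_2a_0)\big)=tr\big(y_1^*y_2\,\mathcal{E}(a_0)\big)=0$ for $y_1,y_2\in Y$ and $a_0\in A_0$, so $Y\perp[YA_0]_2$; the analogous computation (now pulling $\mathcal{E}$ through from the other side) shows $[Y\mathcal{D}]_2\perp[YA_0]_2$, whence $[YA]_2=[Y\mathcal{D}]_2\oplus[YA_0]_2$ orthogonally. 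Since $Z$ is type 2 we have $[ZA_0]_2=Z$, and therefore $[XA_0]_2=Z\oplus[YA_0]_2$ and $X\ominus[XA_0]_2=[Y\mathcal{D}]_2$. This is (2); and since $X$ is right $A$-invariant ($ZA\subseteq Z$, $[YA]_2A\subseteq[YA]_2$), (1) follows because $[XA_0]_2\subsetneq X$ exactly when $[Y\mathcal{D}]_2\neq\{0\}$, i.e. when $Y\neq\{0\}$. For (3) one checks that $[Y\mathcal{D}]_2$ is a right $\mathcal{D}$-submodule, that $[\,[Y\mathcal{D}]_2A\,]_2=[YA]_2$, and that $([Y\mathcal{D}]_2)^*[Y\mathcal{D}]_2$, being the $L^1$-closure of $\mathcal{D}^*(Y^*Y)\mathcal{D}$, lies in $L^1(\mathcal{D})$; and (4) then follows from (2) and (3), since $[Y\mathcal{D}]_2=X\ominus[XA_0]_2$ is determined by $X$, hence so is $Z=X\ominus[\,[Y\mathcal{D}]_2A\,]_2$, and hence so is $Y$ once it is taken to be a $\mathcal{D}$-submodule.

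The substance of the theorem is (5). Let $W=X\ominus[XA_0]_2$. Since $[XA_0]_2$ is a right $\mathcal{D}$-submodule of the right $\mathcal{D}$-module $X$ and $\mathcal{D}$ acts on the right by adjointable operators, $W$ is again a right $\mathcal{D}$-module. Step one is to prove $W^*W\subseteq L^1(\mathcal{D})$. By polarization it suffices to treat $f=w^*w$ with $w\in W$: since $wa_0\in XA_0\subseteq[XA_0]_2\perp W$ we get $tr\big((w^*w)a_0\big)=0$ for $a_0\in A_0$, and as $f$ is self-adjoint this means $f$ annihilates $A_0+A_0^*$ in the $L^1$--$M$ pairing. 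Now $g:=f-\mathcal{E}_1(f)$ still annihilates $A_0+A_0^*$, while $\mathcal{E}_1(g)=0$ forces $g$ to annihilate $\mathcal{D}$ as well; thus $g$ annihilates $\mathcal{D}+A_0+A_0^*=A+A^*$, which is weak*-dense in $M$ by maximality. As $m\mapsto tr(gm)$ is weak*-continuous on $M$, we conclude $g=0$, i.e. $f=\mathcal{E}_1(f)\in L^1(\mathcal{D})$. Now put $Y:=W$ and $Z:=X\ominus[WA]_2$. Since $W$ is a $\mathcal{D}$-module, $[WA]_2=W\oplus[WA_0]_2$, so $Z=[XA_0]_2\ominus[WA_0]_2\subseteq[XA_0]_2$ and $Z$ is a right $\mathcal{D}$-module. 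Granting that $Z$ is right $A$-invariant, one has $[XA_0]_2=[WA_0]_2\oplus[ZA_0]_2$, and subtracting this from $X=W\oplus[WA_0]_2\oplus Z$ yields $W=W\oplus\big(Z\ominus[ZA_0]_2\big)$, so that $Z$ is automatically type 2; together with step one this exhibits $X$ in the form of (1) with $Y=W$ the right wandering subspace.

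The step I expect to be hardest is precisely the $A$-invariance of $Z=X\ominus[WA]_2$. Reducing via the $\mathcal{D}$-module structure to the claim $ZA_0\subseteq Z$, the relation $Za_0\perp W$ is immediate (as $Za_0\subseteq XA_0\perp W$), but $Za_0\perp[WA_0]_2$ is genuinely delicate: unwinding the inner product leads to pairing $w^*z$ against elements of $A_0A_0^*$, over which subdiagonality exerts no control. Following \cite{BL-Beurling}, the way around this is to use that $W^*W\subseteq L^1(\mathcal{D})$ rigidly constrains $[WA]_2$ to be type 1, decomposing it as a column direct sum of cyclic invariant subspaces generated by a ``$\mathcal{D}$-orthonormal'' family in $W$, and then to run the argument against that decomposition. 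Carrying this out in the $\sigma$-finite case is where the subtle technical modifications flagged in the introduction are needed: such generators now lie in $L^2(M)$ rather than in $M$, and the Fuglede--Kadison determinant that organised the corresponding finite-trace estimates is unavailable --- the weak*-density of $A+A^*$ used in the first step above is doing the work that the determinant-based Szeg\"o/state-extension machinery did in the tracial theory.
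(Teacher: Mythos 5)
Your overall route is the one the paper itself takes: the paper offers no written proof of Theorem~\ref{inv1}, saying only that the argument of \cite[Theorem 2.1]{BL-Beurling} carries over once the finite trace is replaced by the functional $tr$ and the module identities for $\mathcal{E}_p$ from Remark~\ref{expect} are fed in, and that is precisely your plan. Parts (1)--(4) are correctly and completely executed, as is the first half of (5): the argument that $W^*W\subset L^1(\mathcal{D})$ via annihilation of $\mathcal{D}+A_0+A_0^*$ and the weak*-density of $A+A^*$ is exactly the right replacement for the determinant-based machinery of the tracial case.

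The one genuine gap is the step you yourself flag and then only sketch: the $A$-invariance of $Z=X\ominus[WA]_2$, together with the column-sum condition $Z^*[WA]_2=(0)$ hidden in the symbol $\oplus^{col}$ (which you never address, although part (5) needs it --- it is what delivers $W^*(X\ominus[WA]_2)=(0)$ in condition (ii) of Theorem~\ref{Co}). You correctly name the tool --- the decomposition $W=\oplus_i u_iL^2(\mathcal{D})$ of Proposition~\ref{newpr}, whose proof uses only $W^*W\subset L^1(\mathcal{D})$ and so is available at this stage --- but not the computation that finishes the job, and that computation requires one further input you do not cite: Ji's decomposition $L^2(M)=\mathcal{H}^2(A)\oplus(\mathcal{H}^2_0(A))^*$ for maximal subdiagonal $A$ \cite[Theorem 3.3]{jig1}. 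Concretely, for $z\in X\ominus[WA]_2$ one has $u_i^*z\perp\mathcal{H}^2(A)$ because $z\perp u_i\mathcal{H}^2(A)\subset[WA]_2$, while for $a_0\in A_0$ one computes $\langle u_i^*z,h^{1/2}a_0^*\rangle=tr(a_0h^{1/2}u_i^*z)=\langle za_0,u_ih^{1/2}\rangle=0$ since $za_0\in[XA_0]_2\perp W\ni u_ih^{1/2}$; hence $u_i^*z$ is orthogonal to $\mathcal{H}^2(A)+(\mathcal{H}^2_0(A))^*=L^2(M)$ and so $u_i^*z=0$. From $W^*Z=(0)$ one then reads off at once that $za\perp[WA]_2$ for every $a\in A$ (so $Z$ is invariant), that the sum is a column sum, and your own bookkeeping then shows $Z$ is type 2. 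So the strategy is sound and matches the source, but as written the crux of (5) is an announced plan rather than an argument, and the $L^2$-density ingredient it secretly relies on goes unmentioned.
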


Building on Theorem \ref{inv1}, we are now able to present the following rather elegant decomposition of the right wandering subspace. This extends \cite[Proposition 2.2]{BL-Beurling}. Although there are close similarities between the proofs of the tracial and the $\sigma$-finite case, there are rather delicate modifications that need to be made for the proof to go through in the general case -- a mere notational change will not suffice. 

\begin{proposition} \label{newpr}  Suppose that $X$ is
as in Theorem {\rm
\ref{inv1}}, and that $W$ is the right wandering subspace of $X$.
Then $W$ may be decomposed as
an orthogonal direct sum $\oplus^2_i \, u_i L^2({\mathcal D})$,
where $u_i$ are partial isometries in $M$ for which 
$u_i(\frac{d\widetilde{\varphi}}{d\tau_L})^{1/2}\in W$, with
$u_i^* u_i \in {\mathcal D}$, and $u_j^* u_i = 0$ if
$i \neq j$.   If $W$ has a cyclic vector for the ${\mathcal D}$-action,
then we need only one partial isometry in the above.
\end{proposition}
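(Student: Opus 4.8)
The plan is a transfinite exhaustion (Zorn's lemma) over orthogonal families of partial isometries in $M$, built on an extraction lemma obtained from polar decomposition, after first recording the structural facts about $W$. By Theorem~\ref{inv1}(2)--(3) we may write $W=[Y{\mathcal D}]_2$ for a closed ${\mathcal D}$-submodule $Y$ with $Y^*Y\subseteq L^1({\mathcal D})$, so that $W$ is a closed right ${\mathcal D}$-submodule of $L^2(M)$ with $W^*W\subseteq L^1({\mathcal D})$ and (as in the proof of Corollary~\ref{perp}) $w'^*w=\mathcal{E}(w'^*w)\in L^1({\mathcal D})$ for all $w,w'\in W$. Since right multiplication by $d$ and by $d^*$ are mutually adjoint on $L^2(M)$ (by traciality of $tr$), the orthogonal complement inside $W$ of any closed right ${\mathcal D}$-submodule is again one, and still has its inner products landing in $L^1({\mathcal D})$; this is what lets the exhaustion be iterated.

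The core is an extraction lemma: if $V\subseteq W$ is a closed right ${\mathcal D}$-submodule with $V^*V\subseteq L^1({\mathcal D})$ and $0\neq\xi\in V$, then there is a partial isometry $u\in M$ with $u^*u\in{\mathcal D}$, $uL^2({\mathcal D})\subseteq V$, $u(\tfrac{d\widetilde\varphi}{d\tau_L})^{1/2}\in V$, and $\xi\in uL^2({\mathcal D})$. Take the polar decomposition $\xi=u|\xi|$. Because $\xi^*\xi\in L^1({\mathcal D})^+$ one has $|\xi|=(\xi^*\xi)^{1/2}\in L^2({\mathcal D})^+$, and the first delicate point is that $u^*u$ — the support of $|\xi|$, equivalently of $\xi^*\xi$ — lies in ${\mathcal D}$: in the tracial case $|\xi|$ is affiliated with ${\mathcal D}$ so its spectral projections are automatically in ${\mathcal D}$, but in the type~III case this must instead be read off from $\xi^*\xi\in L^1({\mathcal D})$, whose support projection is fixed by the dual action $\theta$ and therefore lies in $(M\rtimes_\varphi\mathbb{R})^\theta\cap({\mathcal D}\rtimes_\varphi\mathbb{R})={\mathcal D}$. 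Writing $p:=u^*u\in{\mathcal D}$, a duality argument gives $[|\xi|{\mathcal D}]_2=pL^2({\mathcal D})$: if $\eta\in pL^2({\mathcal D})$ is orthogonal to $|\xi|{\mathcal D}$ then $\eta^*|\xi|=0$ in $L^1({\mathcal D})$, which forces $\eta=p\eta=0$ since $|\xi|$ has range projection $p$. Hence $uL^2({\mathcal D})=u\,[|\xi|{\mathcal D}]_2=[\xi{\mathcal D}]_2\subseteq V$ and $\xi=u|\xi|\in uL^2({\mathcal D})$.

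The remaining and most delicate ingredient of the extraction lemma is that the generator $u(\tfrac{d\widetilde\varphi}{d\tau_L})^{1/2}$ of $uL^2({\mathcal D})$ lands in $V$. In the tracial proof of \cite[Proposition~2.2]{BL-Beurling} one simply realises $u$ as the $L^2$-limit of $\xi g_n=ue_n\in W$, where the $e_n$ are spectral projections of $|\xi|$ (lying in ${\mathcal D}$) and $g_n\in{\mathcal D}$, and concludes $u\in W$. This is unavailable here: $M\not\subseteq L^2(M)$ (so there is no vector ``$u$'' to begin with), and in the type~III case the $e_n$ no longer lie in ${\mathcal D}$. One must instead approximate the vector $u(\tfrac{d\widetilde\varphi}{d\tau_L})^{1/2}$ itself by elements of $V$, and this is where the module properties of $\mathcal{E}$ (the identities $\mathcal{E}_s(abc)=a\mathcal{E}_q(b)c$ for ${\mathcal D}$-valued outer factors, the multiplicativity $\mathcal{E}(ab)=\mathcal{E}(a)\mathcal{E}(b)$ on the Hardy spaces from the Proposition above, and $\mathcal{E}_1(h)=h_{\mathcal D}$) must be combined carefully with the interplay of $h$ and the module action of ${\mathcal D}$. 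I expect this to be the main obstacle; the rest of the argument is essentially the tracial one with $tr$ in place of the finite trace.

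Granting the extraction lemma, one runs the exhaustion: by Zorn's lemma choose a maximal family $\{u_i\}$ of partial isometries in $M$ with $u_i^*u_i\in{\mathcal D}$, $u_i(\tfrac{d\widetilde\varphi}{d\tau_L})^{1/2}\in W$, $u_iL^2({\mathcal D})\subseteq W$, and the subspaces $u_iL^2({\mathcal D})$ pairwise orthogonal. Orthogonality of these modules, together with $u_i(\tfrac{d\widetilde\varphi}{d\tau_L})^{1/2}\in W$ and $W^*W\subseteq L^1({\mathcal D})$, forces $h^{1/2}u_j^*u_ih^{1/2}\in L^1({\mathcal D})$ to vanish (apply $\mathcal{E}$ to the relevant products and use orthogonality), whence $u_j^*u_i=0$ for $i\neq j$ since $h^{1/2}$ is injective. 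If $W':=W\ominus[\bigoplus_i u_iL^2({\mathcal D})]_2$ were nonzero, it would inherit $(W')^*W'\subseteq L^1({\mathcal D})$ and $W'\perp u_jL^2({\mathcal D})$, and the extraction lemma applied to any $0\neq\xi\in W'$ would produce a new partial isometry violating maximality. Hence $W=[\bigoplus_i u_iL^2({\mathcal D})]_2$, the asserted decomposition. Finally, if $W$ has a cyclic vector $\xi_0$ for the ${\mathcal D}$-action, then $W=[\xi_0{\mathcal D}]_2$ and a single application of the extraction lemma to $\xi_0$ already yields $W=u_0L^2({\mathcal D})$, so one partial isometry suffices.
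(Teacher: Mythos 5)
Your route is genuinely different from the paper's: instead of invoking the Junge--Sherman classification of normal Hilbert ${\mathcal D}$-modules to split $W$ into cyclic pieces spatially isomorphic to $eL^2({\mathcal D})$ with $e\in{\mathcal D}$, and then realising each spatial isomorphism by left multiplication with a partial isometry, you construct the $u_i$ directly by polar decomposition of vectors in $W$ and exhaust by Zorn's lemma. Most of your extraction lemma is sound: $\xi^*\xi\in L^1({\mathcal D})$ gives $|\xi|\in L^2({\mathcal D})^+$; the support projection $p=u^*u$ lies in ${\mathcal D}$ because it is fixed by the dual action $\theta$; the duality argument gives $[\,|\xi|{\mathcal D}]_2=pL^2({\mathcal D})$; and hence $uL^2({\mathcal D})=[\xi{\mathcal D}]_2\subseteq V$. (You should also record why $u$ lies in $M$ rather than merely in $M\rtimes_{\sigma^\varphi}\mathbb{R}$: from $\theta_s(\xi)=e^{-s/2}\xi$ and uniqueness of the polar decomposition one gets $\theta_s(u)=u$.) The orthogonality and maximality arguments at the end are fine and essentially coincide with the paper's.

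As submitted, however, the proof has a genuine gap: you explicitly decline to establish that $u\bigl(\frac{d\widetilde\varphi}{d\tau_L}\bigr)^{1/2}\in V$, calling it ``the main obstacle'' and only listing tools that might be relevant. Since $u_i h^{1/2}\in W$ is one of the assertions of the proposition (and is exactly what the subsequent corollary uses to place the $u_i$ in $A$), this cannot be left open. The good news is that the step follows at once from what you have already proved, once you observe that $h=\frac{d\widetilde\varphi}{d\tau_L}$ is affiliated with ${\mathcal D}\rtimes_{\sigma^\varphi}\mathbb{R}$: indeed $h^{it}=\lambda(t)$, and the copy of ${\mathcal D}\rtimes_{\sigma^\varphi}\mathbb{R}$ inside $M\rtimes_{\sigma^\varphi}\mathbb{R}$ contains all of the unitaries $\lambda(t)$, so $h$ coincides with $h_{\mathcal D}$ under the canonical identification (this is precisely why the paper may describe $L^2({\mathcal D})$ as the closure of $\{h^{1/2}d:d\in{\mathcal D}\}$). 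Hence $h^{1/2}\in L^2({\mathcal D})$, so $ph^{1/2}\in pL^2({\mathcal D})=[\,|\xi|{\mathcal D}]_2$, and therefore $uh^{1/2}=u(ph^{1/2})\in u[\,|\xi|{\mathcal D}]_2=[\xi{\mathcal D}]_2\subseteq V$. With that one observation inserted, your argument closes and constitutes a complete, somewhat more hands-on alternative to the paper's module-theoretic proof.
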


\begin{proof}   By the theory of representations of a
von Neumann algebra (see e.g.\ the discussion at the
start of Section 3 in
\cite{js}), any normal Hilbert
${\mathcal D}$-module is an $L^2$ direct sum of cyclic
Hilbert ${\mathcal D}$-modules,
and if $K$ is a normal
cyclic Hilbert ${\mathcal D}$-module, then
$K$ is spatially isomorphic to $eL^2({\mathcal D})$, for
an orthogonal projection $e \in {\mathcal D}$.

Suppose that the latter isomorphism is implemented by a unitary
${\mathcal D}$-module map $\psi$. If in addition $K \subset W$, let $g = \psi(eh^{1/2}) \in W$ where $h=\frac{d\widetilde{\varphi}}{d\tau_L}$.  Then $tr(d^* g^* g d) =
\Vert \psi(e d) \Vert_2^2 = tr(d^* h^{1/2}eh^{1/2}d)$, for each $d \in {\mathcal D}$.
By Theorem \ref{inv1}, $u^* u \in L^1({\mathcal D})$,  and so $g^* g = h^{1/2}eh^{1/2}$. 
Hence there exists a partial isometry $u$ with initial projection $e$ such that $g=ueh^{1/2}=uh^{1/2}$. 
the modular action of $\psi$ we will then have that $\psi(eh^{1/2}d)=\psi(eh^{1/2})d=uh^{1/2}d$ 
for any $d\in \mathcal{D}$. Since $L^2(\mathcal{D})$ is the closure of $\{h^{1/2}d: d\in \mathcal{D}\}$, it follows 
that $\psi(eL^2(\mathcal{D}))=uL^2(\mathcal{D})$.
 
Given $u_i$ and $u_j$ with $i\neq j$, we have that $u_iL^2(\mathcal{D}), u_jL^2(\mathcal{D})\subset W$. 
Hence $L^2(\mathcal{D})u_j^* u_iL^2(\mathcal{D}) \subset L^1({\mathcal D})$. Since for 
any $d\in \mathcal{D}$ we have that  $tr(h^{1/2}u_j^* u_ih^{1/2} d) = tr(\psi(e_jh^{1/2})^*\psi(e_ih^{1/2})d)= tr(\psi(e_jh^{1/2})^*\psi(e_ih^{1/2}d))=tr(h^{1/2}e_je_ih^{1/2}d)=0$, it follows from the previously mentioned fact that 
$h^{1/2}u_j^* u_ih^{1/2}=0$, and hence that $u_j^* u_i=0$. (To see this recall that the embedding $M\to L^2(M):a\to h^{1/2}eh^{1/2}$ is injective \cite{Kos}.) In the case where $i=j$ we of course have that 
$u_i^*u_i=e_i\in \mathcal{D}$. Putting these facts together,
we see that $W$ is of the desired form.
\end{proof}

\begin{corollary}  Suppose that $X$ is
as in Theorem {\rm \ref{inv1}}, and that $W$ is the right wandering subspace of $X$.
If indeed $X\subset \mathcal{H}^2(A)$, then $Z\perp L^2(\mathcal{D})$. If additionally $A$ is maximal subdiagonal, then the partial isometries $u_i$ described in the preceding Proposition, all belong to $A$.
\end{corollary}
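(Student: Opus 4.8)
The plan is to treat the two assertions in turn. The first is a short computation with $\mathcal E$; the second reduces quickly to a duality statement for maximal subdiagonal algebras whose proof is the real work.

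\emph{First assertion.} Since $Z$ is type $2$, $Z=[ZA_0]_2$. As $Z\subseteq X\subseteq\mathcal H^2(A)$, it suffices to show $[\mathcal H^2(A)A_0]_2\perp L^2(\mathcal D)$. Approximating a general element of $\mathcal H^2(A)$ by elements $h^{1/2}b$ with $b\in A$ (writing $h=\frac{d\widetilde\varphi}{d\tau_L}$, and recalling that, since $\varphi\circ\mathcal E=\varphi$, $h^{1/2}\in L^2(\mathcal D)$), one has for $a_0\in A_0$ that $\mathcal E_2(h^{1/2}ba_0)=h^{1/2}\mathcal E(b)\mathcal E(a_0)=0$, by the bimodularity of $\mathcal E$ (Remark~\ref{expect}) together with its multiplicativity on $A$; letting $b$ vary and passing to the limit gives $\mathcal H^2(A)A_0\subseteq\mathcal H^2_0(A)$, hence $Z=[ZA_0]_2\subseteq\mathcal H^2_0(A)$. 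Since $\mathcal H^2_0(A)\perp L^2(\mathcal D)$ — for $g\in\mathcal H^2_0(A)$, $d\in L^2(\mathcal D)$ one has $\langle g,d\rangle=tr(d^*g)=tr\big(\mathcal E_1(d^*g)\big)=tr\big(d^*\mathcal E_2(g)\big)=0$, exactly as in the proof of Corollary~\ref{perp} — we conclude $Z\perp L^2(\mathcal D)$.

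\emph{Second assertion.} Now assume in addition that $A$ is maximal subdiagonal, so $\mathcal H^2(A)=H^2(A)$. By Proposition~\ref{newpr}, $u_ih^{1/2}\in W\subseteq X\subseteq H^2(A)$ for each $i$. Using Ji's decomposition $L^2(M)=H^2(A)\oplus H^2_0(A)^*$ we have $H^2(A)^\perp=H^2_0(A)^*$, and since $H^2_0(A)=[h^{1/2}A_0]_2$ this equals $[A_0^*h^{1/2}]_2$. Hence $u_ih^{1/2}\perp a_0^*h^{1/2}$ for every $a_0\in A_0$; writing this out,
\[
0=tr\big((a_0^*h^{1/2})^*\,u_ih^{1/2}\big)=tr(h^{1/2}a_0u_ih^{1/2})=tr(ha_0u_i)=\varphi(a_0u_i),
\]
using cyclicity of $tr$ and $tr(h\,x)=\varphi(x)$ for $x\in M$. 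Thus $\varphi(a_0u_i)=0$ for all $a_0\in A_0$, i.e.\ $u_i$ annihilates $\{ha_0:a_0\in A_0\}$, which is norm-dense in $\mathcal H^1_0(A):=\{f\in\mathcal H^1(A):\mathcal E_1(f)=0\}$; equivalently $u_i$ lies in the annihilator $\mathcal H^1_0(A)^\perp$ of $\mathcal H^1_0(A)$ inside $M=L^1(M)^*$. The inclusion $A\subseteq\mathcal H^1_0(A)^\perp$ is immediate (for $a\in A$, $a_0\in A_0$: $(ha_0)a=h(a_0a)$ with $\mathcal E_1(ha_0a)=h\mathcal E(a_0a)=0$, so $tr((ha_0)a)=tr(\mathcal E_1(ha_0a))=0$). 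So the argument is complete once we know the reverse inclusion $\mathcal H^1_0(A)^\perp\subseteq A$ — equivalently, that the pre-annihilator of $A$ in $L^1(M)$ is exactly $\mathcal H^1_0(A)$ — for then $u_i\in A$.

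That identification ${}_\perp A=\mathcal H^1_0(A)$ (equivalently $\{m\in M:mh^{1/2}\in\mathcal H^2(A)\}=A$, the $\sigma$-finite, non-tracial substitute for the finite-case fact $H^2(A)\cap M=A$) is the decisive point, and is where the maximality of $A$ — the weak\textsuperscript{*} density of $A+A^*$ — is indispensable; for a general analytically conditioned algebra it fails, which is the source of the dichotomy developed later in the paper. I expect it to be available either by importing the structure theory of maximal subdiagonal algebras in the $\sigma$-finite setting (Ji \cite{jig1}, Bekjan and Xu \cite{BX}), or by an argument parallel to the F\&M\,Riesz and unique-normal-state-extension results; establishing it cleanly, where one cannot simply ``divide by $h^{1/2}$'', is the main obstacle.
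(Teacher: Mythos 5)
Your first assertion is proved correctly and by essentially the same route as the paper ($Z=[ZA_0]_2\subset[\mathcal H^2(A)A_0]_2=\mathcal H^2_0(A)$, and $\mathcal H^2_0(A)\perp L^2(\mathcal D)$), so there is nothing to add there.

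For the second assertion there is a genuine gap, and you have in effect flagged it yourself. Your computation only extracts the scalar information $\varphi(a_0u_i)=tr(ha_0u_i)=0$ for $a_0\in A_0$, which (via $L^2(M)=H^2(A)\oplus H^2_0(A)^*$) is \emph{equivalent} to the hypothesis $u_ih^{1/2}\in H^2(A)$ you started from; you have therefore reduced the problem to the duality statement $\{m\in M: mh^{1/2}\in\mathcal H^2(A)\}=A$, which you then leave unproved (``I expect it to be available\dots is the main obstacle''). As it stands the argument is not complete. The paper avoids this entirely by extracting \emph{operator-valued} rather than scalar information: since $u_iL^2(\mathcal D)\subset W\subset X\subset H^2(A)$, one has $au_ih^{1/2}\in A_0H^2(A)\subset H^2_0(A)$ for every $a\in A_0$, and since $\mathcal E_2$ annihilates $H^2_0(A)$ this gives $\mathcal E(au_i)h^{1/2}=\mathcal E_2(au_ih^{1/2})=0$, whence $\mathcal E(au_i)=0$ by injectivity of $m\mapsto mh^{1/2}$; the characterisation of maximal subdiagonal algebras in \cite[Theorem 2.2]{jisa} then yields $u_i\in A$ directly. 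That cited characterisation is precisely the already-established substitute for the duality you were reaching for.

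Your route could in fact be repaired with one more line rather than a new theorem: since $A_0$ is a left $\mathcal D$-module, your identity gives $0=\varphi(da_0u_i)=\varphi\bigl(\mathcal E(da_0u_i)\bigr)=\varphi\bigl(d\,\mathcal E(a_0u_i)\bigr)$ for all $d\in\mathcal D$, and taking $d=\mathcal E(a_0u_i)^*$ together with the faithfulness of $\varphi$ upgrades the scalar condition to $\mathcal E(a_0u_i)=0$; at that point \cite[Theorem 2.2]{jisa} applies and the unproved identification $\mathcal H^1_0(A)^{\perp}\cap M=A$ is not needed. Without some such step, the proposal does not establish the second claim.
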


\begin{proof} If indeed $X\subset\mathcal{H}^2(A)$, it is a fairly trivial observation to make that $Z=[ZA_0]_2 \subset [XA_0]_2\subset [\mathcal{H}^2(A)A_0]_2=\mathcal{H}^2_0(A)$. It is clear from the proof of Corollary \ref{perp} that $\mathcal{H}^2(A)=\mathcal{H}^2_0(A)\oplus L^2(\mathcal{D})$, and hence the first claim follows. 

Now suppose that $A$ is maximal subdiagonal. To see the second claim recall that in the proof of Proposition \ref{newpr}, we showed that $u_iL^2(\mathcal{D})\subset W$ for each $i$. Hence given any $a\in A_0$, and taking $h=\frac{d\widetilde{\varphi}}{d\tau_L}$, we will therefore have that $au_ih^{1/2}\in aW \subset A_0X \subset A_0H^2(A) \subset H^2_0(A)$. But $\mathcal{E}_2$ annihilates $H^2_0(A)$, and hence we must have that $0=\mathcal{E}_2(au_ih^{1/2})=\mathcal{E}(au_i)h^{1/2}$. It now follows from the injectivity of the injection $M\to L^2(M):f\to fh^{1/2}$ (see \cite{Kos}), that $\mathcal{E}(au_i)=0$. Since $a\in A_0$ was arbitrary, we may now apply \cite[Theorem 2.2]{jisa} to conclude that $u_i\in A$ as claimed. 
\end{proof}

\begin{corollary} \label{adcor}  If $X$ is an
invariant subspace of the form described in
Theorem {\rm \ref{inv1}}, then $X$ is type 1 if and only if
 $X = \oplus^{col}_i \, u_i \mathcal{H}^2(A)$, for $u_i$ as
in Proposition {\rm \ref{newpr}}.
\end{corollary}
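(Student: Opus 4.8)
The plan is to reduce the entire statement to the single module-closure identity $[WA]_2 = \oplus^{col}_i \, u_i\mathcal{H}^2(A)$, where $W$ is the right wandering subspace of $X$ and the $u_i$ are the partial isometries furnished by Proposition \ref{newpr}. Indeed, by definition $X$ is type $1$ exactly when $X = [WA]_2$; and for any $X$ of the form in Theorem \ref{inv1}, Proposition \ref{newpr} already gives $W = \oplus^2_i \, u_i L^2(\mathcal{D})$ with $u_i \in M$ partial isometries satisfying $u_i^*u_i \in \mathcal{D}$, $u_j^*u_i = 0$ for $i \neq j$, and $u_i h^{1/2} \in W$ (here $h = \frac{d\widetilde{\varphi}}{d\tau_L}$). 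Granting the identity, the forward implication is just the substitution $X = [WA]_2$, while for the converse the hypothesis ``$X = \oplus^{col}_i u_i\mathcal{H}^2(A)$ with the $u_i$ as in Proposition \ref{newpr}'' already encodes $W = \oplus^2_i u_iL^2(\mathcal{D})$, so that the identity returns $X = [WA]_2$, i.e. $X$ is type $1$.

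Everything therefore comes down to the one-summand identity $[uL^2(\mathcal{D})A]_2 = u\mathcal{H}^2(A)$ for a single such $u$, with $e = u^*u \in \mathcal{D}$. The two structural facts about $\mathcal{H}^2(A)$ that I would use are: it is a right $A$-module, immediate from the realization $\mathcal{H}^2(A) = [h^{1/2}A]_2$ together with $Aa \subseteq A$; and it is a \emph{left} $\mathcal{D}$-module, which I would deduce from the alternative realization $\mathcal{H}^2(A) = [Ah^{1/2}]_2$ (available because $A$ is $\sigma^\varphi$-invariant, as noted before Remark \ref{expect}) together with $\mathcal{D}A \subseteq A$. Left multiplication by $e$ is then an orthogonal projection of the Hilbert space $\mathcal{H}^2(A)$ onto its closed subspace $e\mathcal{H}^2(A)$, and since $u = ue$ acts isometrically on $eL^2(M) \supseteq e\mathcal{H}^2(A)$, the image $u\mathcal{H}^2(A) = u(e\mathcal{H}^2(A))$ is a \emph{closed} subspace of $L^2(M)$. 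For ``$\supseteq$'': $L^2(\mathcal{D}) \subseteq \mathcal{H}^2(A)$ by Corollary \ref{perp}, so $L^2(\mathcal{D})A \subseteq \mathcal{H}^2(A)A \subseteq \mathcal{H}^2(A)$, hence $uL^2(\mathcal{D})A \subseteq u\mathcal{H}^2(A)$ and, the latter being closed, $[uL^2(\mathcal{D})A]_2 \subseteq u\mathcal{H}^2(A)$. For ``$\subseteq$'': since $u_ih^{1/2} \in W$ is orthogonal to every summand $u_jL^2(\mathcal{D})$ with $j \neq i$ (because $\langle u_ih^{1/2}, u_j\eta\rangle = tr(\eta^*u_j^*u_ih^{1/2}) = 0$), in fact $uh^{1/2} \in uL^2(\mathcal{D})$; thus $uh^{1/2}A \subseteq [uL^2(\mathcal{D})A]_2$, and taking closures, $u\mathcal{H}^2(A) = u[h^{1/2}A]_2 = [uh^{1/2}A]_2 \subseteq [uL^2(\mathcal{D})A]_2$.

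It then remains to assemble the orthogonal direct sum: the subspaces $u_i\mathcal{H}^2(A)$ are mutually orthogonal, since $\langle u_i\xi, u_j\eta\rangle = tr(\eta^*u_j^*u_i\xi) = 0$ for $i \neq j$, and a routine approximation argument — using that right multiplication by a fixed element of $A$ is bounded on $L^2(M)$, together with $u_j^*u_i = 0$ — yields $[(\oplus^2_i u_iL^2(\mathcal{D}))A]_2 = \oplus^{col}_i [u_iL^2(\mathcal{D})A]_2$. Combined with the one-summand identity this proves the displayed claim, and hence the corollary. I expect the genuinely delicate step to be the closedness of $u_i\mathcal{H}^2(A)$: in the tracial setting of \cite{BL-Beurling} this is essentially free, because there $H^2(A)$ is literally the $L^2$-closure of the algebra $A$, whereas here it hinges on the (nontrivial) $c$-independent description of the Hardy spaces that simultaneously exhibits $\mathcal{H}^2(A)$ as a left $\mathcal{D}$-module and a right $A$-module. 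A secondary point to watch is that $u_ih^{1/2}$ really lands in the $i$-th summand of $W$, which is exactly where the relations $u_j^*u_i = 0$ are invoked.
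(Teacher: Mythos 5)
Your proposal is correct and follows essentially the same route as the paper: reduce both implications to the identity $[u_iL^2(\mathcal{D})A]_2 = u_i\mathcal{H}^2(A)$ applied to the wandering-subspace decomposition of Proposition \ref{newpr}. The paper leaves that identity (and the closedness of $u_i\mathcal{H}^2(A)$) implicit, whereas you supply the details via the left $\mathcal{D}$-module/right $A$-module structure of $\mathcal{H}^2(A)$; that is a faithful elaboration rather than a different argument.
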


\begin{proof}  If $X$ is type 1, then $X = [W A]_2$
where $W$ is the right wandering space, and so the one assertion
follows from Proposition {\rm \ref{newpr}}.
If $X = \oplus^{col}_i \, u_i \mathcal{H}^2(A)$, for $u_i$ as above, then
$[X A_0]_2 = \oplus^{col}_i \, u_i \mathcal{H}^2(A_0)$, and from this
it is easy to argue that  $W =
\oplus^{col}_i \, u_i L^2({\mathcal D})$.
Thus $X = [W A]_2 = \oplus^{col}_i \, u_i \mathcal{H}^2(A)$.
  \end{proof}
  
The following Theorem extends \cite[Proposition 2.4]{BL-Beurling}. Although the proofs of the two cases are almost identical, 
there was a typo in (ii) and (iv) of \cite[Proposition 2.4]{BL-Beurling}. (The column sum $K_1\oplus^{col}K_2$ should've been 
$K_2\oplus^{col}K_1$.) For this reason we choose state the proof in full. 

\begin{proposition} \label{typestuff}   Let $X$ be a
closed $A$-invariant subspace of $L^2(M)$, where $A$ is an
analytically conditioned subalgebra of $M$.
\begin{itemize}  \item [(1)]
If $X = Z \oplus [Y A]_2$ as in Theorem
 {\rm \ref{inv1}}, then $Z$ is type 2, and $[Y A]_2$ is type 1.
 \item [(2)]  If $A$ is a maximal subdiagonal algebra,
and if $X = K_2 \oplus^{col} K_{1}$ where
$K_1$ and $K_{2}$ are types 1 and 2 respectively,
then $K_1$ and $K_2$ are respectively the unique
spaces $Z$ and $[Y A]_2$ in  Theorem  {\rm \ref{inv1}}.
 \item [(3)]  If $A$ and $X$ are as in {\rm (2)},
and if $X$ is type 1 (resp.\ type 2),
then the space  $Z$ of Theorem  {\rm \ref{inv1}}
for $X$ is $(0)$ (resp.\ $Z = X$).
 \item [(4)]   If  $X = K_2 \oplus^{col} K_{1}$ where
$K_1$ and $K_{2}$ are types 1 and 2 respectively,
then the right wandering subspace for $X$
equals the right wandering subspace for $K_1$.
  \end{itemize} \end{proposition}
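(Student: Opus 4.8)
The plan is to establish the four items in order, leaning on Theorem \ref{inv1}, Corollary \ref{adcor}, and Proposition \ref{newpr}, and using the uniqueness assertions in Theorem \ref{inv1}(4) as the organizing principle. For (1), I would start from the decomposition $X = Z \oplus^{col} [YA]_2$ provided by Theorem \ref{inv1}, where $Z$ is type 2 by hypothesis (so there is nothing to prove for $Z$), and show that $[YA]_2$ is type 1. The key observation is that, after replacing $Y$ by $[Y\mathcal{D}]_2$ via Theorem \ref{inv1}(3) (which does not change $[YA]_2$), the space $[YA]_2$ is itself an $A$-invariant subspace of the form described in Theorem \ref{inv1} — with ``$Z$-part'' equal to $(0)$ and ``$Y$-part'' equal to the same $Y$. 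By Theorem \ref{inv1}(2) applied to $[YA]_2$, its right wandering subspace is $[Y\mathcal{D}]_2 = Y$, and this $\mathcal{D}$-module generates $[YA]_2$ as an $A$-module by construction; hence $[YA]_2$ is type 1 by definition.

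For (2), suppose $A$ is maximal subdiagonal and $X = K_2 \oplus^{col} K_1$ with $K_1$ type 1 and $K_2$ type 2. Both $K_1$ and $K_2$ are right $A$-invariant (this is part of their being of type 1, resp. type 2, as invariant subspaces), so by Theorem \ref{inv1}(5) each has a decomposition of the form in Theorem \ref{inv1}(1). Since $K_2$ is type 2 its right wandering subspace is $(0)$, so by Theorem \ref{inv1}(2) the ``$Y$-part'' of $K_2$ is $(0)$ and $K_2$ is its own ``$Z$-part''. For $K_1$, type 1 means (via Corollary \ref{adcor}) that $K_1 = \oplus^{col}_i u_i\mathcal{H}^2(A)$; in particular the ``$Z$-part'' of $K_1$ is $(0)$, so $K_1 = [Y'A]_2$ for $Y'$ its right wandering subspace. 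Concatenating, $X = K_2 \oplus^{col}[Y'A]_2$ is a decomposition of $X$ of the form in Theorem \ref{inv1}(1) with $Z = K_2$ and $Y = Y'$. By the uniqueness in Theorem \ref{inv1}(4), $K_2$ equals the canonical $Z$ and $[Y'A]_2 = K_1$ equals the canonical $[YA]_2$ for $X$, as claimed.

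Item (3) is then immediate from (2): if $X$ is type 1, write $X = X \oplus^{col}(0)$ with $K_1 = X$ (type 1) and $K_2 = (0)$ (type 2), and conclude by (2) that $Z = (0)$; if $X$ is type 2, write $X = X \oplus^{col}(0)$ with $K_2 = X$, $K_1 = (0)$, and conclude $Z = X$. For (4), with $X = K_2\oplus^{col}K_1$ as in the statement (not assuming $A$ maximal), I would compute $[XA_0]_2 = [K_2A_0]_2 \oplus^{col}[K_1A_0]_2$ — the column-sum structure is preserved because $K_2 \perp K_1 A_0 \subset K_1$ and $K_1 \perp K_2 A_0 \subset K_2$, so no cross terms appear — and since $K_2$ is type 2 we have $[K_2A_0]_2 = K_2$. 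Hence $X \ominus [XA_0]_2 = (K_2\oplus^{col}K_1)\ominus(K_2\oplus^{col}[K_1A_0]_2) = K_1\ominus[K_1A_0]_2$, which is exactly the right wandering subspace of $K_1$.

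The main obstacle I anticipate is the bookkeeping in (1) and (4) around orthogonality of the column sums: one must verify that $[XA_0]_2$ genuinely splits as $K_2\oplus^{col}[K_1A_0]_2$ (equivalently that $[K_2A_0]_2 = K_2$ and that the two pieces remain orthogonal), and in (1) that $[YA]_2$ really does fall under the umbrella of Theorem \ref{inv1} with trivial $Z$-part, i.e. that $\{y^*x : x,y\in Y\}\subset L^1(\mathcal{D})$ — but this last inclusion is inherited verbatim from the hypothesis on $Y$ in Theorem \ref{inv1}(1). Once these orthogonality and module-closure facts are pinned down, everything else is a formal consequence of the uniqueness statements already proved.
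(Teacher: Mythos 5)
Your proposal is correct and follows essentially the same route as the paper: (1) by showing $Y$ lies in the right wandering subspace of $[YA]_2$ so that this wandering subspace generates $[YA]_2$, (2) by feeding the wandering-subspace decomposition of $K_1$ into the uniqueness clause of Theorem \ref{inv1}, and (4) by the orthogonality computation resting on $[K_2A_0]_2=K_2$ and $K_1^*K_2=(0)$. The minor repackagings (routing (1) through parts (2)--(3) of Theorem \ref{inv1} applied to $[YA]_2$ with trivial $Z$-part, and deducing (3) from (2)) do not change the substance of the argument.
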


\begin{proof}  (1) \  Clearly in this case
$Z$ is type 2.  To see that $[Y A]_2$ is type 1, note that
since $Y \perp X A_0$ by part (ii) of Theorem 
{\rm \ref{inv1}}, we must have $Y \perp Y A_0$.
Thus $Y \subset [Y A]_2 \ominus [Y A_0]_2$,
and consequently $[Y A]_2 = [([Y A]_2 \ominus [Y A_0]_2) A]_2$.

(2) \  Suppose that $X = K_2 \oplus^{col} K_{1}$ where
$K_1$ and $K_{2}$ are types 1 and 2 respectively.
Let $Y$ be the right wandering space for $K_1$. Then of course 
$K_1 = [Y A]_2$. By Theorem \ref{inv1} we have $Y^* Y \subset L^1({\mathcal D})$.
So $X =  K_{2} \oplus^{col} [Y A]_2$,
and by the uniqueness assertion in Theorem \ref{inv1},
$K_2$ is the space $Z$ in Theorem \ref{inv1} for $X$.

(3) \ This is obvious from Theorem {\rm \ref{inv1}}.

(4) \ If $K = K_2 \oplus^{col} K_{1}$ as above,
then $K_{2} = [K_{2} A_0]_2 \subset [K A_0]_2$,
and so $K \ominus [K A_0]_2 \subset K \ominus K_{2} = K_1$.
Thus $K \ominus [K A_0]_2 \subset K_1 \ominus [K_1 A_0]_2$.
Conversely, if $\eta \in K_1 \ominus [K_1 A_0]_2$,
then $\eta  \perp K A_0$ since $\eta\in K_1$ ensures that $\eta^* K_2 = (0)$.
So $\eta \in K \ominus [K A_0]_2$.
\end{proof}

On collecting the information reflected in the preceding four results, we obtain the following structure theorem for invariant subspaces.

\begin{theorem} \label{main}  If $A$ is a maximal subdiagonal
subalgebra of $M$, and if $K$ is a closed right  $A$-invariant subspace 
of $L^2(M)$,
then: \begin{itemize}
\item [(1)]   $K$ may be written uniquely as
an (internal) $L^2$-column sum $K_2 \oplus^{col} K_{1}$
of a type 1 and a type 2 invariant subspace of $L^2(M)$, respectively.
  \item [(2)]  If
$K  \neq (0)$ then $K$ is type 1 if and only if
$K = \oplus_i^{col} \, u_i \, H^2$, for $u_i$  partial isometries
with mutually orthogonal ranges and $|u_i| \in {\mathcal D}$.
\item [(3)]
The right wandering subspace $W$ of $K$
is an $L^2({\mathcal D})$-module in the sense of Junge and Sherman, and in
particular $W^* W \subset L^{1}({\mathcal D})$.
\end{itemize}
\end{theorem}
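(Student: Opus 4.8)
The plan is to read off Theorem~\ref{main} from the four preceding results, the point being that maximal subdiagonality of $A$ is precisely what makes Theorem~\ref{inv1}(5) applicable, so that an \emph{arbitrary} right $A$-invariant subspace $K$ of $L^2(M)$ already falls under the hypotheses of Theorem~\ref{inv1}, Proposition~\ref{newpr}, Corollary~\ref{adcor} and Proposition~\ref{typestuff}.

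For (1), I would first invoke Theorem~\ref{inv1}(5) to write $K = Z \oplus^{col} [YA]_2$ as in Theorem~\ref{inv1}(1), with $Y$ the right wandering subspace of $K$. By Proposition~\ref{typestuff}(1), $Z$ is type 2 and $[YA]_2$ is type 1, so the choice $K_2 := Z$, $K_1 := [YA]_2$ gives the required decomposition. For uniqueness, if $K = K_2' \oplus^{col} K_1'$ is any decomposition into a type 2 and a type 1 invariant subspace, then Proposition~\ref{typestuff}(2) identifies $K_2'$ with the space $Z$ and $K_1'$ with $[YA]_2$ of Theorem~\ref{inv1}, whence $K_2' = K_2$ and $K_1' = K_1$.

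For (2), recall that for a maximal subdiagonal algebra the $L^2$-hull $\mathcal{H}^2(A)$ coincides with the Hardy space $H^2 = H^2(A)$, so Corollary~\ref{adcor} says that a nonzero $K$ is type 1 if and only if $K = \oplus^{col}_i u_i H^2$ with the $u_i$ as in Proposition~\ref{newpr}. One then only has to reconcile the properties listed there with the wording of the statement: $u_i^* u_i$ is a projection lying in $\mathcal{D}$, hence $|u_i| = (u_i^* u_i)^{1/2} = u_i^* u_i \in \mathcal{D}$; and from $u_j^* u_i = 0$ for $i \neq j$ one obtains, on multiplying on the left by $u_j$, that $(u_j u_j^*)(u_i u_i^*) = 0$, i.e.\ the range projections of the $u_i$ are mutually orthogonal. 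For (3), Theorem~\ref{inv1}(2) identifies the right wandering subspace $W = K \ominus [KA_0]_2$ with $[Y\mathcal{D}]_2$, which by Proposition~\ref{newpr} is the orthogonal direct sum $\oplus^2_i u_i L^2(\mathcal{D})$ of cyclic normal Hilbert $\mathcal{D}$-modules; this is exactly an $L^2(\mathcal{D})$-module in the sense of Junge and Sherman. That $W^*W \subset L^1(\mathcal{D})$ then follows either from this concrete description (for $w_1 = \sum_i u_i \xi_i$ and $w_2 = \sum_j u_j \eta_j$ with $\xi_i, \eta_j \in L^2(\mathcal{D})$ one computes $w_1^* w_2 = \sum_i \xi_i^*(u_i^* u_i)\eta_i \in L^1(\mathcal{D})$, and $L^1(\mathcal{D})$ is norm-closed in $L^1(M)$), or from $W = [Y\mathcal{D}]_2$ together with the inclusion $Y^*Y \subset L^1(\mathcal{D})$ of Theorem~\ref{inv1}(1).

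Since the whole argument is a matter of citing and collating, there is no genuine obstacle here; the only points that require a little care are ensuring that the Theorem~\ref{inv1}-form is indeed available for a general $K$ (which is exactly where maximal subdiagonality enters) and the small translations in part (2) from ``$u_i^* u_i \in \mathcal{D}$ and $u_j^* u_i = 0$'' to ``$|u_i| \in \mathcal{D}$ with mutually orthogonal ranges''.
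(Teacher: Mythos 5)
Your proposal is correct and follows exactly the route the paper intends: the paper offers no separate proof of Theorem~\ref{main}, stating only that it is obtained ``on collecting the information reflected in the preceding four results,'' and your collation via Theorem~\ref{inv1}(5), Proposition~\ref{typestuff}(1)--(2), Corollary~\ref{adcor} and Proposition~\ref{newpr} is precisely that. The small translations you supply in part (2) (from $u_i^*u_i\in\mathcal{D}$ and $u_j^*u_i=0$ to $|u_i|\in\mathcal{D}$ and mutually orthogonal ranges) are accurate and fill in the only details the paper leaves implicit.
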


\section{Characterisations of maximal subdiagonal subalgebras}
 
In order to prove our main theorem, we need to invoke the Haagerup reduction theorem (see \cite{HJX}). The use of the reduction theorem in studying $\sigma$-finite subdiagonal subalgebras, was pioneered by Xu \cite{Xu} in his innovative application of the theorem in studying maximality properties of such algebras. We pause to briefly review the main points of that construction. (Further details may be found in \cite{Xu}, \cite{pru}, \cite{jig2}, \cite{jig1}.)

Let $\mathbb{Q}_D$ be the diadic rationals and let $R=M\rtimes_{\sigma^{\varphi}}\mathbb{Q}_D$. Since $\mathbb{Q}_D$ is discrete, there exists a canonical expectation $\Phi$ from $R$ onto $M$. The dual weight $\widehat{\varphi}$ on $R$ turns out to be a faithful normal state. The Haagerup reduction theorem then informs us that there exists an increasing net $R_n$ of finite von Neumann algebras each equipped with a faithful state $\widehat{\varphi}_n=\widehat{\varphi}|_{R_n}$, and a concomitant net of expectations $\Phi_n:R\to R_n$ for which $\Phi_n\circ \Phi_m=\Phi_m\circ\Phi_n=\Phi_n$ when $n\geq m$. (In the case that $\varphi$ is a state, these nets are in fact a sequences.) Moreover $\cup_{n} R_n$ is $\sigma$-strongly dense in $R$. As far as $L^p$ spaces are concerned, the theorem further tells us that for each $0<p<\infty$, $\cup_n L^p(R_n)$ is dense in $L^p(R)$ with each $L^p(R_n)$ canonically isometric to $L^p(R_n, \tau_n)$, where $\tau_n$ is a canonical normal tracial state on $R_n$. 

For weak*-closed unital maximal subdiagonal subalgebras $A$ of the type described above, the work of Xu tells us that in the case presently under consideration (the case where $\varphi$ is a state), both $A$ and the expectation $\mathcal{E}:M\to \mathcal{D}$ extend to $R$ in such a way that $\widehat{A}$ is a maximal subdiagonal subalgebra of $R$, with the extension $\widehat{\mathcal{E}}$ of $\mathcal{E}$ mapping onto $\widehat{A}\cap \widehat{A}^*=\mathcal{D}\rtimes_{\sigma^{\varphi}}\mathbb{Q}_D$. In fact there is a net of subalgebras $\widehat{A}_n\subset R_n$ such that each $\widehat{A}_n$ is subdiagonal in $R_n$ with respect to \emph{both} $\widetilde{\varphi}_n$ and $\tau_n$, with in addition $\cup_{n=1}^\infty \widehat{A}_n$ $\sigma$-weakly dense in $\widehat{A}$. Here $\widehat{A}$ is just the $\sigma$-weak closure of the span of $\{\lambda(t)\pi(x): t\in \mathbb{Q}_D\}$ and may hence be regarded as representing something like $A\rtimes_{\sigma^{\varphi}}\mathbb{Q}_D$. (Here $\pi$ denotes the canonical $*$-homomorphism embedding $M$ into $R=M\rtimes_{\sigma^{\varphi}}\mathbb{Q}_D$.) We then also have that $\Phi(\widehat{A})=A$. The algebra $\widehat{A}_n$ is just $\widehat{A}_n=\widehat{A}\cap R_n$.

\begin{lemma}
 Let $A$ be an analytically conditioned algebra. Then on applying the same construction outlined above to $A$, $\widehat{A}$ will then be an analytically conditioned subalgebra of $R$, and each $\widehat{A}_n=\widehat{A}\cap R_n$ an analytically conditioned subalgebra of $R_n$.
\end{lemma}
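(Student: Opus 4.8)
The plan is to verify the two defining conditions of an analytically conditioned algebra — invariance under the modular automorphism group, and multiplicativity on the algebra of the associated conditional expectation — for $\widehat A$ sitting inside $R$, and then to descend to each $\widehat A_n \subset R_n$. First I would address $\widehat A$. Recall that $R = M \rtimes_{\sigma^\varphi} \mathbb{Q}_D$ carries the dual weight $\widehat\varphi$, which here is a faithful normal state, and that $\widehat A$ is by construction the $\sigma$-weak closure of the span of $\{\lambda(t)\pi(x) : t \in \mathbb{Q}_D, x \in A\}$. The modular automorphism group $\sigma^{\widehat\varphi}_s$ of $R$ fixes the $\lambda(t)$ (since these implement, on the dense subalgebra, the modular action being adjoined) and acts on $\pi(M)$ as $\pi \circ \sigma^\varphi_s$; since $\sigma^\varphi_s(A) = A$ by hypothesis, $\sigma^{\widehat\varphi}_s$ maps the generating set of $\widehat A$ into itself, and by $\sigma$-weak continuity $\sigma^{\widehat\varphi}_s(\widehat A) = \widehat A$ for all $s$. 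This is essentially the argument already used by Xu; I would cite \cite{Xu} for the precise form of the modular action on the crossed product. For multiplicativity: the conditional expectation $\widehat{\mathcal E} : R \to \mathcal{D} \rtimes_{\sigma^\varphi} \mathbb{Q}_D$ is characterised (via the discussion of $\overline{\mathcal E}$ in Remark \ref{expect}, applied with $\mathbb{Q}_D$ in place of $\mathbb R$) by $\widehat{\mathcal E}(\lambda(t)\pi(x)) = \lambda(t)\pi(\mathcal E(x))$, and $\widehat\varphi \circ \widehat{\mathcal E} = \widehat\varphi$ holds by \cite[Theorem 4.7]{Gol}; on the generating set one checks directly that $\widehat{\mathcal E}((\lambda(s)\pi(x))(\lambda(t)\pi(y))) = \widehat{\mathcal E}(\lambda(s)\pi(x))\widehat{\mathcal E}(\lambda(t)\pi(y))$ using $\mathcal E(x \sigma^\varphi_{?}(y)) = \mathcal E(x)\sigma^\varphi_?(\mathcal E(y))$, which follows from multiplicativity of $\mathcal E$ on $A$ together with $\sigma^\varphi$-invariance of both $A$ and $\mathcal D$. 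A $\sigma$-weak continuity/density argument then extends multiplicativity to all of $\widehat A$, so $\widehat A$ is analytically conditioned in $R$.

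Next I would pass to $\widehat A_n = \widehat A \cap R_n$. The Haagerup reduction machinery provides, on $R$, a faithful normal state $\widehat\varphi$ whose modular group $\sigma^{\widehat\varphi}_s$ leaves each $R_n$ globally invariant, so that $\widehat\varphi_n = \widehat\varphi|_{R_n}$ has modular group $\sigma^{\widehat\varphi_n}_s = \sigma^{\widehat\varphi}_s|_{R_n}$. Since $\sigma^{\widehat\varphi}_s(\widehat A) = \widehat A$ and $\sigma^{\widehat\varphi}_s(R_n) = R_n$, we get $\sigma^{\widehat\varphi_n}_s(\widehat A_n) = \widehat A_n$. The diagonal of $\widehat A_n$ is $\widehat A_n \cap \widehat A_n^* = (\widehat A \cap \widehat A^*) \cap R_n = (\mathcal D \rtimes \mathbb{Q}_D) \cap R_n$; call it $\mathcal D_n$. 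The expectation $\Phi_n : R \to R_n$ commutes with $\widehat{\mathcal E}$ (both are built from the Haagerup construction and the subdiagonal structure is compatible with it — this is exactly the content of Xu's result that $\widehat A_n$ is subdiagonal in $R_n$ with respect to $\widehat\varphi_n$), so the conditional expectation $\mathcal E_n : R_n \to \mathcal D_n$ associated to $\widehat\varphi_n$ (which exists precisely because $\sigma^{\widehat\varphi_n}_s(\mathcal D_n) = \mathcal D_n$) is the restriction of $\widehat{\mathcal E}$ to $R_n$, equivalently $\mathcal E_n = \widehat{\mathcal E} \circ \Phi_n|_{R_n} = \Phi_n \circ \widehat{\mathcal E}|_{R_n}$. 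Multiplicativity of $\widehat{\mathcal E}$ on $\widehat A$ then restricts to multiplicativity of $\mathcal E_n$ on $\widehat A_n$, and $\widehat\varphi_n \circ \mathcal E_n = \widehat\varphi_n$ by restriction. Hence each $\widehat A_n$ is analytically conditioned in $R_n$.

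The main obstacle — and the point needing genuine care rather than routine bookkeeping — is establishing that the conditional expectation $\mathcal E_n$ intrinsically attached to $(R_n, \widehat\varphi_n)$ coincides with the restriction of $\widehat{\mathcal E}$, i.e. that $\widehat{\mathcal E}$ and $\Phi_n$ commute and that $\widehat{\mathcal E}(R_n) = \mathcal D_n$. One has to check that the modular group of $\widehat\varphi$ really does preserve each $R_n$ and that the $\mathcal D$-valued expectation interacts correctly with the averaging/ultrafilter construction underlying $R_n$; this is where I would lean most heavily on the already-cited structural results of Xu \cite{Xu} and the reduction theorem \cite{HJX}, possibly reproving the small compatibility statement $\Phi_n \circ \widehat{\mathcal E} = \widehat{\mathcal E} \circ \Phi_n$ from the explicit formulas for $\Phi_n$ and $\widehat{\mathcal E}$ on the dense subalgebra generated by the $\lambda(t)$. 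Everything else is a matter of tracking global invariance through closures, which the $\sigma$-weak density of $\cup_n R_n$ in $R$ and of the span of the $\lambda(t)\pi(x)$ in $R$ makes straightforward.
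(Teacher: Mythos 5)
Your proposal is in substance the same argument as the paper's: the paper disposes of this lemma by observing that the relevant portions of Xu's Lemmas 3.1 and 3.2 in \cite{Xu} --- precisely the computations you carry out (the modular group of $\widehat{\varphi}$ fixing the $\lambda(t)$ and acting as $\pi\circ\sigma^{\varphi}_s$ on $\pi(M)$, multiplicativity of $\widehat{\mathcal{E}}$ on the generating span, compatibility of $\widehat{\mathcal{E}}$ with $\Phi_n$) --- nowhere use the weak* density of $A+A^*$ in $M$, and hence carry over verbatim to analytically conditioned algebras. Your explicit verification is therefore the intended proof, done by hand rather than by citation, and the two points you single out as delicate (invariance of $R_n$ under $\sigma^{\widehat{\varphi}}$, and $\Phi_n\circ\widehat{\mathcal{E}}=\widehat{\mathcal{E}}\circ\Phi_n$) are exactly the parts of the reduction machinery the paper imports.

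One step is missing. The definition of an analytically conditioned subalgebra refers to the expectation onto the \emph{full} diagonal $\widehat{A}\cap\widehat{A}^*$, so you must also prove that $\widehat{A}\cap\widehat{A}^*=\mathcal{D}\rtimes_{\sigma^{\varphi}}\mathbb{Q}_D$; the paper explicitly lists this identification as part of what is taken over from Xu's Lemma 3.1. You use it in your second paragraph (to compute $\widehat{A}_n\cap\widehat{A}_n^*$) but never establish it, and a priori the self-adjoint part of the weak* closure of the generating span could be strictly larger than $\mathcal{D}\rtimes_{\sigma^{\varphi}}\mathbb{Q}_D$, in which case $\widehat{\mathcal{E}}$ would be an expectation onto the wrong subalgebra. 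The inclusion $\mathcal{D}\rtimes_{\sigma^{\varphi}}\mathbb{Q}_D\subset\widehat{A}\cap\widehat{A}^*$ is immediate from $\sigma^{\varphi}_t(\mathcal{D})=\mathcal{D}$. For the converse, the multiplicativity you have just proved gives, for $a\in\widehat{A}\cap\widehat{A}^*$ (so that both $a$ and $a^*$ lie in $\widehat{A}$), the identity $\widehat{\mathcal{E}}\bigl((a-\widehat{\mathcal{E}}(a))^*(a-\widehat{\mathcal{E}}(a))\bigr)=\widehat{\mathcal{E}}(a^*a)-\widehat{\mathcal{E}}(a)^*\widehat{\mathcal{E}}(a)=0$, whence $a=\widehat{\mathcal{E}}(a)\in\mathcal{D}\rtimes_{\sigma^{\varphi}}\mathbb{Q}_D$ by faithfulness of $\widehat{\mathcal{E}}$. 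With this inserted, your argument is complete.
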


\begin{proof} The latter part of the proof of \cite[Lemma 3.1]{Xu}, where it is shown that in the case where $A$ is maximal subdiagonal $\widehat{\mathcal{E}}$ is multiplicative on $\widehat{A}$ and $\widehat{A}\cap \widehat{A}^*=\mathcal{D}\rtimes_{\sigma^{\varphi}}\mathbb{Q}_D$, carries over verbatim to the present context. Hence the claim regarding $\widehat{A}$ follows. Similarly on removing the sections of the proof of \cite[Lemma 3.2]{Xu} devoted to showing that the $\sigma$-weak density of $\widehat{A}+\widehat{A}^*$ in $R$ ensures the $\sigma$-weak density of $\widehat{A}_n+\widehat{A}_n^*$ in $R_n$, the rest of the proof of this lemma essentially proves that $\widehat{A}_n$ is an analytically conditioned subalgebra of $R_n$. 
\end{proof}

\begin{lemma}\label{L2}
 Let $A$ be an analytically conditioned algebra.  If $L^2(M)=\mathcal{H}^2(A)\oplus(\mathcal{H}_0^2(A))^*$, then also 
$L^2(R)=\mathcal{H}^2(\widehat{A})\oplus(\mathcal{H}_0^2(\widehat{A}))^*$, and $L^2(R_n)=\mathcal{H}^2(\widehat{A}_n)\oplus(\mathcal{H}_0^2(\widehat{A}_n))^*$ for each $n$.
\end{lemma}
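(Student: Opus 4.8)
The plan is to transfer the hypothesis first \emph{up} to $R$ and then \emph{down} to each $R_n$, working throughout with the equivalent formulation in terms of density. By Corollary~\ref{perp} (and the orthogonality established in its proof), applied to the analytically conditioned algebras $A$, $\widehat{A}$ and $\widehat{A}_n$ — the latter two being analytically conditioned by the preceding lemma — the hypothesis is equivalent to the density of $\mathcal{H}^2(A)+\mathcal{H}^2(A)^*$ in $L^2(M)$, and likewise the two conclusions are equivalent to $[\mathcal{H}^2(\widehat{A})+\mathcal{H}^2(\widehat{A})^*]_2=L^2(R)$ and $[\mathcal{H}^2(\widehat{A}_n)+\mathcal{H}^2(\widehat{A}_n)^*]_2=L^2(R_n)$ respectively. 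So it suffices to establish these two density statements.

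For the passage to $R$: the $\widehat{\varphi}$-preserving conditional expectation $\Phi\colon R\to M$ induces a canonical $*$-preserving isometric embedding $\iota\colon L^2(M)\hookrightarrow L^2(R)$ carrying $mh_M^{1/2}$ to $\pi(m)h_R^{1/2}$ (with $h_M,h_R$ the respective density elements; cf.\ the properties of expectations recalled in Remark~\ref{expect}), and since $\mathrm{span}\{\lambda(t)\pi(m):t\in\mathbb{Q}_D,\ m\in M\}$ is $\sigma$-strongly dense in $R$ we have $L^2(R)=\overline{\mathrm{span}}\{\lambda(t)\iota(\xi):t\in\mathbb{Q}_D,\ \xi\in L^2(M)\}$. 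As $\pi(A)\subseteq\widehat{A}$, the embedding carries $\mathcal{H}^2(A)$ into $\mathcal{H}^2(\widehat{A})$ and, being $*$-preserving, carries $\mathcal{H}^2(A)^*$ into $\mathcal{H}^2(\widehat{A})^*$; hence the hypothesis gives $\iota(L^2(M))\subseteq V:=[\mathcal{H}^2(\widehat{A})+\mathcal{H}^2(\widehat{A})^*]_2$. On the other hand each $\lambda(t)$ is a unitary lying in $\widehat{A}\cap\widehat{A}^*=\mathcal{D}\rtimes_{\sigma^{\varphi}}\mathbb{Q}_D$ and in the centralizer of $\widehat{\varphi}$, so left multiplication by $\lambda(t)$ preserves both $\mathcal{H}^2(\widehat{A})=[\widehat{A}h_R^{1/2}]_2$ and $\mathcal{H}^2(\widehat{A})^*=[h_R^{1/2}\widehat{A}^*]_2$ (for the latter, combine $\lambda(t)h_R^{1/2}=h_R^{1/2}\lambda(t)$ with $\lambda(t)\widehat{A}^*\subseteq\widehat{A}^*$). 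Consequently $\lambda(t)V\subseteq V$ for every $t$, and therefore $V\supseteq\overline{\mathrm{span}}\{\lambda(t)\iota(L^2(M))\}=L^2(R)$, which is the first density statement.

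For the passage down to $R_n$: let $\Phi_n\colon R\to R_n$ be the $\widehat{\varphi}$-preserving conditional expectation supplied by the Haagerup reduction. Its $L^2$-extension $\Phi_n^{(2)}$ is a $*$-preserving norm-one projection of $L^2(R)$ onto $L^2(R_n)$ satisfying $\Phi_n^{(2)}(xh_R^{1/2})=\Phi_n(x)h_R^{1/2}$ for $x\in R$ (here one identifies $L^2(R_n)$ with its image in $L^2(R)$, under which the $\widehat{\varphi}_n$-density element goes to $h_R^{1/2}$ and hence $\mathcal{H}^2(\widehat{A}_n)=[\widehat{A}_nh_R^{1/2}]_2$). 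From the reduction construction — see \cite{Xu}, and as already used in the preceding lemma — one has $\Phi_n(\widehat{A})\subseteq\widehat{A}\cap R_n=\widehat{A}_n$; hence $\Phi_n^{(2)}$ maps $\mathcal{H}^2(\widehat{A})$ into $\mathcal{H}^2(\widehat{A}_n)$ and, using that $\Phi_n^{(2)}$ is $*$-preserving, $\mathcal{H}^2(\widehat{A})^*$ into $\mathcal{H}^2(\widehat{A}_n)^*$. Applying the contraction $\Phi_n^{(2)}$ to the identity $L^2(R)=[\mathcal{H}^2(\widehat{A})+\mathcal{H}^2(\widehat{A})^*]_2$ just established, and using that $\Phi_n^{(2)}$ is onto $L^2(R_n)$, gives $L^2(R_n)=[\mathcal{H}^2(\widehat{A}_n)+\mathcal{H}^2(\widehat{A}_n)^*]_2$, which is the second density statement.

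The one genuinely new ingredient, compared with the finite theory, is the use — in the step from $L^2(M)$ to $L^2(R)$ — of the unitaries $\lambda(t)$, $t\in\mathbb{Q}_D$, which simultaneously generate $R$ over $M$ and lie in the diagonal $\widehat{A}\cap\widehat{A}^*$, thereby transporting density in $L^2(M)$ into density in $L^2(R)$. The rest is careful bookkeeping with the Haagerup $L^2$-spaces: one must set up the embeddings $\iota$ and $\Phi_n^{(2)}$ and check that they interact correctly with the density elements $h_M, h_R$ and with the adjoint operation — in particular the claim that, inside $L^2(R)$, the Hardy space $\mathcal{H}^2(\widehat{A}_n)$ really is $[\widehat{A}_nh_R^{1/2}]_2$ — and import from \cite{Xu} that $\Phi_n$ maps $\widehat{A}$ into $\widehat{A}_n$. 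I expect this matching-up of the several $L^2$-identifications to be the fussiest point; none of it should be an essential obstacle.
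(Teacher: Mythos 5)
Your proof is correct and follows essentially the same route as the paper's: transfer the hypothesis along the canonical embedding $L^2(M)\hookrightarrow L^2(R)$ induced by $\Phi$, use the unitaries $\lambda(t)$ (which lie in $\widehat{A}\cap\widehat{A}^*$ and generate $R$ over $M$) to propagate density to all of $L^2(R)$, and push down to $L^2(R_n)$ via the $L^2$-extension of $\Phi_n$. The only cosmetic difference is that the paper multiplies explicit approximating sequences on the right by $\lambda(t)$, whereas you observe that left multiplication by $\lambda(t)$ preserves the closed subspace $[\mathcal{H}^2(\widehat{A})+\mathcal{H}^2(\widehat{A})^*]_2$ (using $\lambda(t)h_R^{1/2}=h_R^{1/2}\lambda(t)$); both variants rest on the same facts.
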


\begin{proof}
Let $h_M$ be the density $h_M=\frac{d\widetilde{\varphi}}{d\tau_L}\in L^1(M)$ where $L= M\rtimes_{\sigma^{\varphi}}\mathbb{R}$. Since $A$ is an analytically conditioned algebra, we have that $\mathcal{H}^2(A)=\overline{\{h_M^{1/2}f:f\in A\}}=\overline{\{fh_M^{1/2}f:f\in A\}}$. Given any $x\in M$, the fact that $h_M^{1/2}x\in L^2(M)=\mathcal{H}^2(A)\oplus(\mathcal{H}_0^2(A))^*$, ensures that we may find sequences $\{a_n\},\{b_n\} \subset A$ such that $h_M^{1/2}(a_n + b_n^*)\to h_M^{1/2}x$ in norm in $L^2(M)$.

We may now apply the conclusions of Remark \ref{expect} to the pair $(M,R)$ and the expectation $\Phi: R\to M$, rather than to the pair $(\mathcal{D},R)$ and the expectation $\mathcal{E}: M\to \mathcal{D}$. Hence for each $1\leq p\leq \infty$, $L^p(M)$ may be regarded as a subspace of $L^p(R)$, and under this identification, the density $h_R=\frac{d\widetilde{\widehat{\varphi}}}{d\tau}\in L^1(R)$ may be identified with $h_M$. So with this identification, we have that $h_R^{1/2}(a_n + b_n^*)\to h_R^{1/2}x$ in norm in $L^2(R)$. Now for any $t\in \mathbb{Q}_D$, we may apply the noncommutative H\"older inequality to conclude that $h_R^{1/2}(a_n + b_n^*)\lambda(t)\to h_R^{1/2}x\lambda(t)$ in norm in $L^2(R)$. It is a trivial observation to make that $\{a_n\lambda(t)\}, \{\lambda(t^{-1})b_n\} \}\subset \widehat{A}$, and hence that  $\{(h_R^{1/2}a_n) + (\lambda(t^{-1})b_n)^*)\}=\{h_R^{1/2}(a_n + b_n^*)\lambda(t)\}\subset\mathcal{H}^2(\widehat{A})+(\mathcal{H}^2(\widehat{A})^*)$. It follows that $\mathrm{span}\{h_R^{1/2}x\lambda(t): x\in M, \lambda(t), t\in \mathbb{Q}_D\} \subset \mathcal{H}^2(\widehat{A})+(\mathcal{H}^2(\widehat{A})^*)$. But by definition $R$ is the $\sigma$-weak closure of $\mathrm{span}\{x\lambda(t): x\in M, \lambda(t), t\in \mathbb{Q}_D\}$. So for any $g\in R$, we may select a net $\{g_\alpha\}$ in this span converging $\sigma$-weakly to $g$. Using the fact that $h_R^{1/2}\in L^2(R)$, it is now an exercise to see that then $\{h_R^{1/2}g_\alpha\}$ converges weakly to $h_R^{1/2}g$. Hence $h^{1/2}_RR$ is contained in the $L^2$-weak-closure of $\mathrm{span}\{h_R^{1/2}x\lambda(t): x\in M, \lambda(t), t\in \mathbb{Q}_D\}$. But since this is a convex set, the weak and norm closures agree. So the norm closure of this space must contain $h^{1/2}_RR$, which is known to be dense in $L^2(R)$. It follows that the norm-closed subspace $\mathcal{H}^2(\widehat{A})+(\mathcal{H}^2(\widehat{A})^*)$ of $L^2(R)$ contains a dense subspace of $L^2(R)$, and hence that $\mathcal{H}^2(\widehat{A})+(\mathcal{H}^2(\widehat{A})^*)=L^2(R)$, as claimed. 

The claim regarding $L^2(R_n)$ follows from the fact that the extension of $\Phi_n$ to $L^2(R)$, maps $L^2(R)$ onto $L^2(R_n)$, and $\mathcal{H}^2(\widehat{A})$ onto $\mathcal{H}^2(\widehat{A}_n).$
\end{proof}

\begin{lemma}\label{unsep}
 Let $A$ be an analytically conditioned algebra.  If any $f\in L^1(M)^+$ which is in the annihilator of $A_0$ belongs to $L^1(\mathcal{D})$, then also 
\begin{itemize}
\item any $f\in L^1(R)^+$ which is in the annihilator of $\widehat{A}_0$ belongs to $L^1(\widehat{\mathcal{D}})$,
\item and for any $n$, any $f\in L^1(R_n)^+$ which is in the annihilator of $(\widehat{A}_n)_0$, belongs to $L^1(\mathcal{D}_n)$.
\end{itemize}
\end{lemma}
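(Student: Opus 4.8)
The plan is to treat the crossed product $R$ first, pushing the problem down to $M$ via the canonical expectation $\Phi\colon R\to M$ exactly as in the proof of Lemma~\ref{L2}, and then to obtain the statement for each $R_n$ from the one for $R$ by means of the reduction expectation $\Phi_n\colon R\to R_n$. Throughout I write $\Phi_1,\ \mathcal{E}_1,\ \widehat{\mathcal{E}}_1,\ (\Phi_n)_1$ for the induced positive, $tr$-preserving contractions on the corresponding $L^1$-spaces (Remark~\ref{expect}, applied as in Lemma~\ref{L2} to the relevant pairs of algebras), using freely the module identities recorded there together with the identity $tr(E_1(\xi)x)=tr(\xi x)$, valid whenever $x$ lies in the range of a conditional expectation $E$.

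First I fix $f\in L^1(R)^+$ with $f\perp\widehat{A}_0$. Since $\widehat{A}$ is an algebra containing $\widehat{\mathcal{D}}$ and $\widehat{\mathcal{E}}$ is a $\widehat{\mathcal{D}}$-bimodule map, $\widehat{A}_0=\widehat{A}\cap\ker\widehat{\mathcal{E}}$ is a $\widehat{\mathcal{D}}$-bimodule. Hence for every $v\in\widehat{\mathcal{D}}$ we have $v\,\widehat{A}_0\,v^{*}\subseteq\widehat{A}_0$, so $v^{*}fv\in L^1(R)^+$ is again orthogonal to $\widehat{A}_0$, and therefore $\Phi_1(v^{*}fv)\in L^1(M)^+$ is orthogonal to $A_0$ (using $\pi(A_0)\subseteq\widehat{A}_0$); the hypothesis on $M$ then forces $\Phi_1(v^{*}fv)\in L^1(\mathcal{D})$. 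In particular, since $1,\lambda(t)\in\widehat{\mathcal{D}}$, both $\Phi_1(f)$ and $\Phi_1(\lambda(-t)f\lambda(t))$ lie in $L^1(\mathcal{D})$ for every $t\in\mathbb{Q}_D$.

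The remaining ``Fourier coefficients'' of $f$ are recovered by taking $v=1+\lambda(t)$ and $v=1+i\lambda(t)$ (both in $\widehat{\mathcal{D}}$): expanding $v^{*}fv$, applying $\Phi_1$, and discarding the summands $\Phi_1(f)$ and $\Phi_1(\lambda(-t)f\lambda(t))$ just shown to lie in $L^1(\mathcal{D})$, one finds that $\Phi_1(\lambda(-t)f)+\Phi_1(f\lambda(t))$ and $\Phi_1(f\lambda(t))-\Phi_1(\lambda(-t)f)$ lie in $L^1(\mathcal{D})$, and hence so does $\Phi_1(\lambda(-t)f)$, for every $t\in\mathbb{Q}_D$. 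Now $\Phi\circ\widehat{\mathcal{E}}=\mathcal{E}\circ\Phi$ on $R$ (an immediate check on the generators $\lambda(t)\pi(x)$), so $\Phi_1\circ\widehat{\mathcal{E}}_1=\mathcal{E}_1\circ\Phi_1$, and $\widehat{\mathcal{E}}_1$ commutes with left multiplication by each $\lambda(-t)\in\widehat{\mathcal{D}}$; therefore $g:=f-\widehat{\mathcal{E}}_1(f)$ satisfies $\Phi_1(\lambda(-t)g)=\Phi_1(\lambda(-t)f)-\mathcal{E}_1\bigl(\Phi_1(\lambda(-t)f)\bigr)=0$ for every $t$. Since for $x\in M$ one has $tr\bigl(g\,\lambda(t)\pi(x)\bigr)=tr\bigl(\Phi_1(\lambda(t)g)\,\sigma^{\varphi}_{t}(x)\bigr)$, which vanishes by the above, and since $\mathrm{span}\{\lambda(t)\pi(x):t\in\mathbb{Q}_D,\ x\in M\}$ is $\sigma$-weakly dense in $R$, it follows that $tr(gz)=0$ for all $z\in R$, so $g=0$ and $f=\widehat{\mathcal{E}}_1(f)\in L^1(\widehat{\mathcal{D}})$. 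This proves the first bullet.

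For the second bullet let $f\in L^1(R_n)^+$ with $f\perp(\widehat{A}_n)_0$, regarded as an element of $L^1(R)^+$. The reduction construction is compatible with the subdiagonal data (cf.\ \cite{Xu}): $\Phi_n(\widehat{A})\subseteq\widehat{A}$, the restriction of $\widehat{\mathcal{E}}$ to $R_n$ is the conditional expectation $\widehat{\mathcal{E}}_n\colon R_n\to\mathcal{D}_n:=\widehat{\mathcal{D}}\cap R_n$ associated with $\widehat{A}_n$, and $\Phi_n$ commutes with $\widehat{\mathcal{E}}$; in particular $\Phi_n(\widehat{A}_0)\subseteq(\widehat{A}_n)_0$. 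Hence for $y\in\widehat{A}_0$ we have $tr(fy)=tr\bigl(f\,\Phi_n(y)\bigr)=0$, so $f\perp\widehat{A}_0$, and the first bullet gives $f\in L^1(\widehat{\mathcal{D}})$. Then $f\in L^1(R_n)\cap L^1(\widehat{\mathcal{D}})$, so $f=\widehat{\mathcal{E}}_1(f)=(\widehat{\mathcal{E}}_n)_1(f)\in L^1(\mathcal{D}_n)$, as required.

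The step I expect to be the main obstacle is precisely the passage from $\Phi_1(f)\in L^1(\mathcal{D})$ — which records only the zeroth ``Fourier coefficient'' of $f$ — to the full conclusion $f\in L^1(\widehat{\mathcal{D}})$. The key point is that testing $f$ against the non-unitary but $\widehat{\mathcal{D}}$-valued elements $1+\lambda(t)$ and $1+i\lambda(t)$ keeps $v^{*}fv$ positive, so the hypothesis on $M$ remains applicable while each coefficient $\Phi_1(\lambda(-t)f)$ is isolated, after which one must verify that these coefficients separate the points of $L^1(R)$. The several elementary compatibilities among $\Phi$, $\Phi_n$, $\widehat{\mathcal{E}}$ and $\mathcal{E}$, and the mechanical module identities for their $L^1$-extensions, are the only other ingredients.
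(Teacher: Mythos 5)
Your proposal is correct and follows essentially the same route as the paper: push $f$ down to $M$ via $\Phi$, use the positivity-preserving conjugations by $\lambda(t)$, $\I+\lambda(t)$ and $\I+i\lambda(t)$ to show each Fourier coefficient $\Phi_1(\lambda(-t)f)$ (the paper uses the right-handed coefficients $\Phi(f\lambda(t))$) lies in $L^1(\mathcal{D})$, then invoke $\Phi\circ\widehat{\mathcal{E}}=\mathcal{E}\circ\Phi$ and the $\sigma$-weak density of $\mathrm{span}\{\lambda(t)\pi(x)\}$ to conclude $f=\widehat{\mathcal{E}}_1(f)$, and finally reduce the $R_n$ case to the $R$ case via $\Phi_n$. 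The only differences (phrasing the conjugations in terms of general $v\in\widehat{\mathcal{D}}$, and left versus right coefficients) are cosmetic.
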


\begin{proof} Let $tr_R$ be the canonical trace functional on $L^1(R)$. We remind the reader that the dual action of $L^1(R)$ on $R$, is given by $tr_R(ab)$ where $a\in L^1(R)$ and $b\in R$. As was noted in the proof of the previous Lemma, we may for any $n$ regard each of $L^1(R_n)$ and $L^1(M)$ as subspaces of $L^1(R)$. Suppose that $A$ satisfies the condition stated in the hypothesis, and let $f\in L^1(R)^+$ be given such that $f$ annihilates $\widehat{A}_0$. 

To prove the first claim, we need to show that then $f\in L^1(\widehat{\mathcal{D}})$. Now since $A_0\subset \widehat{A}_0$, we will for any $a\in A_0$ have that 
$$0=tr_R(fa)=tr_R(\Phi(fa))=tr_R(\Phi(f)a).$$Hence $\Phi(f)\in L^1(M)^+$ with $\Phi(f)\perp A_0$. It therefore follows from the hypothesis that $\Phi(f)\in L^1(\mathcal{D})$. 

Now notice that for any $t,s \in \mathbb{Q}_D$, it is trivially true that $\lambda(t)\widehat{A}_0\lambda(s)\subset \widehat{A}_0$. Using this fact, it is a simple exercise to show that each of $\lambda(t)^*f\lambda(t)$, $(\I+\lambda(t)^*)f(\I+\lambda(t))$, and $(\I-i\lambda(t)^*)f(\I+i\lambda(t))$ are also positive elements of $L^1(R)$ which are orthogonal to  $\widehat{A}_0$. Hence by the same argument as before, each of $\Phi(\lambda(t)^*f\lambda(t))$, $\Phi((\I+\lambda(t)^*)f(\I+\lambda(t)))= \Phi(f) + \Phi(\lambda(t)^*f) + \Phi(f\lambda(t))+ \Phi(\lambda(t)^*f\lambda(t))$, and $\Phi((\I-i\lambda(t)^*)f(\I+i\lambda(t)))=\Phi(f) -i\Phi(\lambda(t)^*f) + i\Phi(f\lambda(t))+ \Phi(\lambda(t)^*f\lambda(t))$, also belong to $L^1(\mathcal{D})$. Simple arithmetic now leads to the conclusion that $$\Phi(f\lambda(t))\in L^1(\mathcal{D})\mbox{ for each }t\in \mathbb{Q}_D.$$

We remind the reader that on elements of the form $\lambda(t)b$ where $t\in \mathbb{Q}_D$ and $b\in M$, the action of $\widehat{\mathcal{E}}$ and $\Phi$ and are respectively given by $\widehat{\mathcal{E}}(\lambda(t)b)= \lambda(t)\mathcal{E}(b)$ and $$\Phi(\lambda(t)b)=\left\{\begin{array}{ll} b &\quad \mbox{if }t=0\\ 0 & \quad \mbox{otherwise} \end{array}\right..$$It easily follows from this that $$\Phi(\widehat{\mathcal{E}}(\lambda(t)b))= \mathcal{E}(\Phi(\lambda(t)b)).$$ Since the span of elements of the form $\lambda(t)b$ is $\sigma$-weakly dense in $R$, the normality of each of $\widehat{\mathcal{E}}$ and $\Phi$, now leads to the conclusion that $\Phi\circ\widehat{\mathcal{E}}= \mathcal{E}\circ\Phi$. On combining this fact with the fact that $\Phi(f\lambda(t))\in L^1(\mathcal{D}))$ for each $t\in \mathbb{Q}_D$, it now follows that
\begin{eqnarray*} 
tr_R(f\lambda(t)b) &=& tr_R(\Phi(f\lambda(t)b))\\
&=& tr_R(\Phi(f\lambda(t))b)\\
&=& tr_R(\mathcal{E}\circ\Phi(f\lambda(t))b)\\
&=& tr_R(\Phi(\widehat{\mathcal{E}}(f\lambda(t)))b)\\
&=& tr_R(\Phi(\widehat{\mathcal{E}}(f)\lambda(t))b)\\
&=& tr_R(\Phi(\widehat{\mathcal{E}}(f)\lambda(t)b))\\
&=& tr_R(\widehat{\mathcal{E}}(f)\lambda(t)b)
\end{eqnarray*}
Once again the fact that $\mathrm{span}\{\lambda(t)))b: t\in \mathbb{Q}_D, b\in M\}$ is $\sigma$-weakly dense in $R$, now ensures that $tr_R(fg)= tr_R(\widehat{\mathcal{E}}(f)g)$ for any $g\in R$. Hence $f=\widehat{\mathcal{E}}(f)$ as required.

The second claim now easily follows from the first. To see this let $f\in L^1(R_n)^+$ be given with $f\perp (\widehat{A}_n)_0$. We need to show that then $f\in L^1(\widehat{\mathcal{D}}_n)=L^1(\widehat{\mathcal{D}})\cap L^1(R_n)$. Using the fact that $\Phi_n((\widehat{A})_0)=(\widehat{A}_n)_0$, it now easily follows that $$tr_R(fa) = tr_R(\Phi_n(fa)) = tr_R(f\Phi_n(a)) =0$$for any $a\in \widehat{A}_0$. Hence by the first part $f\in L^1(\widehat{\mathcal{D}})$ as required.
\end{proof}

We are now ready to prove our main theorem.

\begin{theorem}  \label{Co}
 Let $A$ be an analytically conditioned algebra.  Then the following are equivalent:
\begin{itemize} \item [(i)]  $A$ is maximal subdiagonal,
  \item [(ii)]  For every right
$A$-invariant subspace $X$ of $L^2(M)$, the right wandering subspace
 $W$ of $X$ satisfies
$W^* W \subset L^1({\mathcal D})$, and $W^* (X \ominus [W A]_2) = (0)$.
\item[(iii)] $L^2(M)=\mathcal{H}^2(A)\oplus(\mathcal{H}_0^2(A))^*$, and any $f\in L^1(M)^+$ which is in the annihilator of $A_0$ belongs to $L^1(\mathcal{D})$.
\end{itemize}
\end{theorem}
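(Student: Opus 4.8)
The strategy is to prove the cycle $(i)\Rightarrow(iii)\Rightarrow(ii)\Rightarrow(i)$, exploiting the Haagerup reduction machinery assembled in Lemmas~\ref{L2} and~\ref{unsep} to transfer each statement down to the finite algebras $R_n$ (where the tracial results of Blecher--Labuschagne and of Ji--Saito apply) and back up.

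\emph{$(i)\Rightarrow(iii)$.} If $A$ is maximal subdiagonal, then so is $\widehat{A}$ and each $\widehat{A}_n$ in the reduction. The decomposition $L^2(M)=\mathcal{H}^2(A)\oplus(\mathcal{H}^2_0(A))^*$ is exactly Ji's result \cite[Theorem 3.3]{jig1} quoted in the introduction (the case $p=2$). For the $L^1$-statement, I would argue that this is the unique normal state extension property in $L^1$. In the finite setting this is classical (it is the ``$L^1$'' form of the unique extension property for tracial subalgebras, cf.\ \cite{BL1}); one shows that if $f\in L^1(R_n)^+$ annihilates $(\widehat A_n)_0$ then, $\widehat A_n$ being subdiagonal with respect to the trace $\tau_n$ as well, $f\in L^1(\mathcal D_n)$. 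To pass from the $R_n$ back up to $M$ one uses that $\cup_n L^1(R_n)$ is dense in $L^1(R)$ together with the expectations $\Phi_n$ and $\Phi$: if $f\in L^1(M)^+\subset L^1(R)^+$ annihilates $A_0$, then a density/approximation argument combined with positivity of the expectations forces $f\in L^1(\mathcal D)$. The main technical care here is that density of $\cup_n L^1(R_n)$ must be used compatibly with positivity and with the conditional expectations; one approximates $f$ by $\Phi_n(f)\in L^1(R_n)^+$, notes $\Phi_n(f)\perp (\widehat A_n)_0$, applies the finite case to get $\Phi_n(f)\in L^1(\mathcal D_n)$, and then passes to the limit using $\Phi_n(f)\to f$ in $L^1$.

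\emph{$(iii)\Rightarrow(ii)$.} This is the heart of the argument. Given (iii), Lemma~\ref{L2} yields $L^2(R)=\mathcal{H}^2(\widehat A)\oplus(\mathcal{H}^2_0(\widehat A))^*$ and likewise on each $R_n$, while Lemma~\ref{unsep} yields the corresponding $L^1$-annihilator property on $R$ and on each $R_n$. On each finite algebra $R_n$ these two properties together are known (by the tracial Beurling theory, i.e.\ \cite[Theorem 2.1]{BL-Beurling} together with \cite{BL1}) to force $\widehat A_n$ to be \emph{maximal} subdiagonal in $R_n$ with respect to the trace $\tau_n$; equivalently (Theorem~\ref{inv1}(5) in the tracial case) every right $\widehat A_n$-invariant subspace of $L^2(R_n)$ has the Beurling form, so the wandering-subspace conclusion of (ii) holds for $\widehat A_n$. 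One then has to lift this to $\widehat A$ acting on $L^2(R)$: given a right $\widehat A$-invariant $X\subset L^2(R)$, approximate it by $X\cap L^2(R_n)$ (or by $\Phi_n$-images), deduce the wandering-subspace statement for $X$ from the $R_n$-level statements, and finally restrict to $M$: a right $A$-invariant subspace $X\subset L^2(M)\subset L^2(R)$ generates a right $\widehat A$-invariant subspace $\overline{[X\widehat A]}_2$ of $L^2(R)$, and one must check that the right wandering subspace of $X$ relative to $A$ and relative to $\widehat A$ match up well enough (via the module action of $\lambda(\mathbb Q_D)$ and the expectation $\overline{\mathcal E}$) to transport $W^*W\subset L^1(\mathcal D)$ and $W^*(X\ominus[WA]_2)=(0)$ back down to $M$.

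\emph{$(ii)\Rightarrow(i)$.} Here I would follow the finite-case template of \cite{BL-Beurling} essentially verbatim: the wandering-subspace condition in (ii), applied to well-chosen invariant subspaces of $L^2(M)$ (for instance $X=[JA]_2$ for suitable right ideals, or $X$ arising from a hypothetical larger subdiagonal algebra $B\supsetneq A$ with the same diagonal), is strong enough to force $A+A^*$ to be $\sigma$-weakly dense in $M$, which by the theorem of Xu and Ji--Ohwada--Saito quoted at the start is precisely maximal subdiagonality for an analytically conditioned algebra. Concretely, if $A$ were not maximal there would be a strictly larger analytically conditioned $B$ with $\mathcal D=B\cap B^*$; then $\mathcal H^2(B)\supsetneq\mathcal H^2(A)$ would be a right $A$-invariant subspace whose wandering subspace violates one of the two conditions in (ii), a contradiction. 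The main obstacle in the whole proof is the middle implication and specifically the ``lifting'' step: the Haagerup reduction lemmas give us the two analytic properties on $R$ and $R_n$, but converting the Beurling-type \emph{conclusion} valid on each finite $R_n$ into the conclusion on $L^2(M)$ requires a careful compatibility analysis of wandering subspaces under the expectations $\Phi$, $\Phi_n$ and $\overline{\mathcal E}$, and under the $\mathbb Q_D$-module structure, since these operations do not obviously commute with the operations ``take right wandering subspace'' or ``take $[\,\cdot\,A]_2$-closure''.
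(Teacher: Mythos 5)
Your overall architecture (a three-step cycle plus the Haagerup reduction to the finite layers $R_n$) matches the paper's, but two of your three implications have genuine gaps, and in both cases the fix is to reorder the cycle. The paper proves $(i)\Rightarrow(ii)\Rightarrow(iii)\Rightarrow(i)$: the implication $(i)\Rightarrow(ii)$ is exactly Theorem \ref{inv1}, which is established \emph{directly} in the $\sigma$-finite setting (not by reduction); the reduction is used only for $(iii)\Rightarrow(i)$, and only to transfer the purely algebraic statement ``$\widehat{A}_n+\widehat{A}_n^*$ is $\sigma$-weakly dense in $R_n$'' (known in the finite case to be equivalent to (iii) by \cite{BL1}, \cite{BLsurvey}) up through $\cup_n R_n$ to $R$ and then down to $M$ via $\Phi$. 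Your middle step $(iii)\Rightarrow(ii)$ instead tries to lift the Beurling \emph{conclusion} about arbitrary invariant subspaces from $L^2(R_n)$ to $L^2(R)$ and then to $L^2(M)$. You correctly flag this as the main obstacle, but it is more than an obstacle: a right $A$-invariant subspace of $L^2(M)$ has no useful decomposition along the $R_n$'s, and the operations ``take the right wandering subspace'' and ``take $[\,\cdot\,A]_2$'' do not commute with $\Phi_n$ or $\overline{\mathcal{E}}$, so this lifting is not available. The reduction machinery transfers hypotheses of type (iii) and density statements, not invariant-subspace structure. Once $(iii)\Rightarrow(i)$ is in hand, (ii) follows from Theorem \ref{inv1} for free; trying to reach (ii) from (iii) without passing through (i) is the wrong order.

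Your $(ii)\Rightarrow(i)$ is also gapped: failure of maximality here means failure of weak*-density of $A+A^*$, and there is no reason this produces a strictly larger analytically conditioned algebra $B$ with $B\cap B^*=\mathcal{D}$, nor do you explain why the wandering subspace of $\mathcal{H}^2(B)$ would violate (ii). What does work --- and is the paper's argument for $(ii)\Rightarrow(iii)$ --- is to apply (ii) to two concrete invariant subspaces. For $g\in L^1(M)^+$ annihilating $A_0$, take $X=[g^{1/2}A]_2$; then $g^{1/2}$ lies in the right wandering subspace $W$, so $g=(g^{1/2})^*g^{1/2}\in W^*W\subset L^1(\mathcal{D})$. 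Taking $X=L^2(M)\ominus(\mathcal{H}^2_0(A))^*$, one checks $h^{1/2}\in W$ where $h=\frac{d\widetilde{\varphi}}{d\tau_L}$; the condition $W^*(X\ominus[WA]_2)=(0)$ together with injectivity of $s\mapsto h^{1/2}s$ forces $X=[WA]_2$, and $h^{1/2}W\subset W^*W\subset L^1(\mathcal{D})$ then forces $W\subset L^2(\mathcal{D})$, whence $X=\mathcal{H}^2(A)$. This yields (iii), after which $(iii)\Rightarrow(i)$ proceeds by the reduction argument above. (Your $(i)\Rightarrow(iii)$ sketch is workable but becomes unnecessary once the cycle is reordered.)
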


\begin{proof}  The fact that (i) implies (ii) is
proved in Theorem \ref{inv1}.  We proceed to prove that (ii) implies (iii). 
To this end, let $g \in L^1_+(M)$ be given with $\tau(g A) = 0$. 
Let $f = |g|^{\frac{1}{2}}$. Clearly $f\in L^2(M)$, 
and $f^2=g$. Now set $X = [f A]_2$.
Note that $f \perp [f A_0]_2$ since if $a_n
\in A_0$ with $f a_n \to k$ in $L^2$-norm, then $tr(f^* k) =
\lim_n tr(f^* f a_n) = \lim_n tr(g a_n)=0$. In particular, the fact that $f \perp
[f A_0]_2=[XA_0]_2$, ensures that $f \in X \ominus [XA_0]_2 = W$. So by
hypothesis, $f^2 = g \in L^1({\mathcal D})$.

Next set $X = L^2(M)\ominus (\mathcal{H}^2_0(A))^*$. We will deduce that $A$ satisfies
$L^2$-density. That is that $X = \mathcal{H}^2(A)$.
To this end, note that $X$ is right $A$-invariant. To see this first note that 
since $A$ is subdiagonal, $\{h^{1/2}a_0^*: a_0\in A_0\}$ 
is dense in $(\mathcal{H}^2_0(A))^*$. So $f \in L^2(M)$ is 
orthogonal to $(\mathcal{H}^2_0(A))^*$ if and only if 
$tr(a_0h^{1/2}f)=tr((h^{1/2}a_0^*)^*f)=0$ for every $a_0\in A_0$. Given $f\in X$, $a\in A$
and $a_0\in A_0$, the fact that then $aa_0\in A_0$, ensures that we will then 
have that $tr(a_0h^{1/2}(fa))=tr(aa_0h^{1/2}f)=0$ for every $a_0\in A_0$. 
Hence $fa\in L^2(M)\ominus (\mathcal{H}^2_0(A))^*=X$ as required. 
 
It is easy to see that $h^{1/2} \in X$ where $h=\frac{d\widetilde{\varphi}}{d\tau_L}$. (This is an immediate consequence of the fact that 
$\{a_0h^{1/2}: a_0\in A_0\}$ is dense in $\mathcal{H}^2_0(A)$, and that 
$tr(h^{1/2}(ah^{1/2}))=\varphi(a)=0$ for all $a\in A_0$.) In fact  $h^{1/2} \in W=X\ominus[XA_0]_2$ since for any $a_0\in A_0$ and $f\in X$
we already know that $0=tr(a_0h^{1/2}f)=tr(h^{1/2}(fa_0))$. This forces $h^{1/2}(X \ominus [W A]_2)\subset W^*(X \ominus [W A]_2)=(0)$. 
The injectivity of the embedding $L^2(M)\to L^1(M): s\to h^{1/2}s$ now ensures that $X \ominus [W A]_2=(0)$. However the fact that 
$h^{1/2} \in W$ also ensures that $h^{1/2}W \subset W^* W  \subset L^1({\mathcal D})$. For any $w\in W$ we will then have that 
$h^{1/2}w=\mathcal{E}_1(h^{1/2}w)=h^{1/2}\mathcal{E}_2(w)$. On once again appealing to the injectivity of the embedding $L^2(M)\to L^1(M): s\to h^{1/2}s$, we may now conclude that $w=\mathcal{E}_2(w)\in L^2(\mathcal{D})$ for any $w\in W$. So $X =
[WA]_2 \subset [L^2(\mathcal{D})A]_2\subset \mathcal{H}^2(A)$. 
The converse inclusion $\mathcal{H}^2(A) \subset X$
follows from the fact that $\mathcal{H}^2(A)$ is orthogonal to $(\mathcal{H}^2_0(A))^*$.

We prove that (iii)$\Rightarrow$(i). Given that (iii) holds, it then follows from Lemmata \ref{L2} and \ref{unsep} that (iii) also holds when the pair $(M,A)$ is replaced by any of the pairs $(R_n, \widehat{A}_n)$. But each $R_n$ is a finite von Neumann algebra, and the stated property does not just hold in terms of $(R_n, \widehat{A}_n, \widehat{\varphi}_n, \widehat{\mathcal{E}})$, but also in terms of $(R_n, \widehat{A}_n, \tau_n, \widehat{\mathcal{E}})$ where $\tau_n$ is the canonical finite trace on $R_n$. This bears some justification, and hence we pause to substantiate this claim. Firstly note that the canonical trace on $R_n$ is of the form $\tau_n(\cdot) = \widehat{\varphi}_n(e^{-a_n}\cdot)$ for some element $a_n$ in the von Neumann algebra generated by the operators $\{\lambda(t):t\in \mathbb{Q}_D\}\subset R$. Hence the fact that $\widehat{\varphi}_n\circ\widehat{\mathcal{E}}$ ensures that also $\tau_n(\widehat{\mathcal{E}}(\cdot))=\widehat{\varphi}_n(e^{-a_n}\widehat{\mathcal{E}}(\cdot)) = \widehat{\varphi}_n(\widehat{\mathcal{E}}(e^{-a_n}\cdot))=\widehat{\varphi}_n(e^{-a_n}\cdot)=\tau_n$. So $\widehat{A}_n$ is indeed also a \emph{tracial} subalgebra of $R_n$. It further follows from Corollary II.38 of \cite{Tp} that there exists a topological isomorphism from the $\tau$-measurable operators affiliated with $R_n\rtimes_{\widehat{\varphi}_n} \mathbb{R}$, to those affiliated with $R_n\rtimes_{\tau_n} \mathbb{R}$, in a manner which identifies the $L^p$ spaces corresponding to the two contexts. The Remark immediately following \cite[Corollary II.38]{Tp} moreover informs us that the Haagerup $L^p$-spaces corresponding to the context $R_n\rtimes_{\tau_n} \mathbb{R}$, are of the form $\{f\otimes\exp(\cdot/p):f\in L^p(R_n, \tau_n)\}$, where $L^p(R_n, \tau_n)$ are the ``tracial'' $L^p$-spaces. If one carefully follows the action of these maps, it can be seen that in the case of $R_n$, (iii) holds for the ``Haagerup'' context, if and only if it holds for the ``tracial'' context. 

For the case of finite von Neumann algebras it is known that condition (iii) is equivalent to the condition that $\widehat{A}_n^*+\widehat{A}_n$ is $\sigma$-weakly dense in $R_n$ (\cite{BL1}, \cite{BLsurvey}). Hence for each $n\in \mathbb{N}$, we have that $\widehat{A}_n^*+\widehat{A}_n$ is $\sigma$-weakly dense in $R_n$. Thus the $\sigma$-weak closure of $\cup_{n\in \mathbb{N}}(\widehat{A}_n^*+\widehat{A}_n)$ includes $\cup_{n\in \mathbb{N}}R_n$. But $\cup_{n\in \mathbb{N}}R_n$ is $\sigma$-weakly dense in $R$. Hence the same must be true of $\cup_{n\in \mathbb{N}}(\widehat{A}_n^*+\widehat{A}_n)$. But $\cup_{n\in \mathbb{N}}(\widehat{A}_n^*+\widehat{A}_n)\subset \widehat{A}^*+\widehat{A}$. So $\widehat{A}^*+\widehat{A}$ is $\sigma$-weakly dense in $R$. By the $\sigma$-weak continuity of $\Phi$, $\Phi(\widehat{A}^*+\widehat{A})= A^*+A$ is then $\sigma$-weakly dense in $\Phi(R)=M$. Hence (i) holds.
\end{proof}

\end{document}